\newtheorem{example}{Example}[section] %%%add
\title{A Multigrid method for nonlocal  problems: non-diagonally dominant  Toeplitz-plus-tridiagonal systems
\thanks{This work was supported by NSFC 11601206. }}
\author{Minghua Chen\thanks{Corresponding author. School of Mathematics and Statistics, Gansu Key Laboratory of Applied Mathematics and Complex Systems,
 Lanzhou University, Lanzhou 730000, P.R. China  (Email: chenmh@lzu.edu.cn)}
\and Sven-Erik Ekstr\"om \thanks{
Department of Information Technology, Division of Scientific Computing, Uppsala University - ITC, L ägerhyddsv. 2, hus 2,  P.O. Box 337, SE-751 05, Uppsala, Sweden
(Email: sven-erik.ekstrom@it.uu.se) }
 \and Stefano Serra-Capizzano \thanks{
Department of Science and High Technology, University of Insubria, Via Valleggio 11, 22100 Como, Italy $\&$ Department of Information Technology, Division of Scientific Computing, Uppsala University - ITC, L ägerhyddsv. 2, hus 2,  P.O. Box 337, SE-751 05, Uppsala, Sweden
(Email: stefano.serrac@uninsubria.it, stefano.serra@it.uu.se) }
 }
\begin{document}

\maketitle

\begin{abstract}
The nonlocal problems have been used to model very different applied scientific phenomena, which involve  the  fractional  Laplacian when one looks  at   the  L\'{e}vy processes and stochastic interfaces. This paper deals with the nonlocal problems  on a bounded domain, where the stiffness matrices of the resulting systems are  Toeplitz-plus-tridiagonal  and   far from being diagonally dominant, as it occurs when dealing with  linear  finite element approximations. By exploiting a  weakly diagonally dominant   Toeplitz property of the stiffness matrices, the optimal convergence of the two-grid method is well established [Fiorentino and Serra-Capizzano,  {\em SIAM J. Sci. Comput.},  {17} (1996), pp. 1068--1081; Chen and Deng, {\em SIAM J. Matrix Anal. Appl.}, {38} (2017),  pp. 869--890]; and  there are still questions about best ways to define coarsening and interpolation operator when the stiffness matrix is far from being weakly diagonally dominant [St\"{u}ben,  {\em  J. Comput. Appl. Math.},  {128} (2001), pp. 281--309]. In this work, using spectral indications from our analysis of the involved matrices, the simple (traditional) restriction operator and prolongation operator are employed in order to handle general algebraic systems which are {\em neither   Toeplitz nor weakly diagonally dominant} corresponding to the fractional Laplacian kernel and the constant kernel, respectively. We focus our efforts on  providing  the detailed proof of the convergence of the two-grid method for  such situations. Moreover, the convergence of the  full multigrid is also discussed with the constant kernel.   The numerical experiments are performed to verify the convergence with only  $\mathcal{O}(N \mbox{log} N)$ complexity by the fast Fourier transform, where $N$ is the number of the grid points.
\end{abstract}

\begin{keywords}
multigrid methods, nonlocal problems, Toeplitz-plus-tridiagonal system,  non-diagonally dominant system, fast Fourier transform
\end{keywords}

\begin{AMS}
26A33, 65M55, 65T50
\end{AMS}

\pagestyle{myheadings}
\thispagestyle{plain}
\markboth{M. H. CHEN, S.  EKSTR\"OM, AND S. SERRA-CAPIZZANO}{MULTIGRID METHOD FOR NONLOCAL  PROBLEMS}

\section{Introduction}
The nonlocal diffusion problems have been used to model very different scientific phenomena occurring in various applied fields, for example in biology, particle systems, image processing, coagulation models, mathematical finance, etc \cite{Andreu:10}.
When one looks at the L\'{e}vy processes and  stochastic interfaces, the nonlocal operator that appears naturally is the fractional Laplacian \cite{Andreu:10,Bertoin:96,Zoia:07}. Recently, the nonlocal volume-constrained diffusion problems,  the  so-called nonlocal model for distinguishing the nonlocal diffusion problems, attracted the wide interest of scientists \cite{Aksoylu:14,Du:12}, where the linear scalar peridynamic model can be considered as a special case  \cite{Du:12,Silling:00}.  In particular, it is  pointed out  that  the fractional Laplacian for anomalous diffusion are special cases of the nonlocal model \cite{D'Elia:13,Du:12}. In the literature, we have already a lot of important proposals for numerically solving nonlocal problems or nonlocal models. For example, a  finite difference-quadrature scheme  for the fractional Laplacian  have been derived in  \cite{Huang:14}. Finite element approximations for the fractional Laplacian \cite{Acosta:17,D'Elia:13} or tempered fractional Laplacian  have been discussed in \cite{Zhang:18}. A
fast conjugate gradient  Galerkin method   has been used for solving efficiently  the resulting system arising from  a peridynamic model \cite{Wang:12}.
In this work, we especially focus our efforts on efficient multigrid method (MGM), by providing the strict convergence proof for  nonlocal problems.

Multigrid method  are among the most efficient iterative methods for solving large scale systems of equations, arising from the discretization of partial differential equations (PDEs) \cite{Bramble:87,Hackbusch:01,Xu:17}.
Using the  regularity and  finite element approximation theory,  the convergence estimates of the V-cycle MGM have  been proved for the elliptic PDEs \cite{Bank:85,Bramble:87,Bramble:91} and the  fractional problems \cite{CNOJ:16,CBQW:18,Jiang:15}. By using the compact notion of symbol and its basic analytical features, the V-cycle optimal convergence has been derived for the case of  multilevel linear system whose coefficient matrices belong to the circulant, Hartely, or $\tau$ algebras or  to the Toeplitz class \cite{Arico:07,Arico:04,Bolten:15,Serra:02}. It directly  tackles the  stiffness matrix of the resulting algebraic system, which can often be derived directly from the underlying matrices, without any reference to the grids.
In general, when considering the discretization of PDEs/FDEs, it is hard to make a theoretically concise statement  \cite{Xu:97,Xu:17}, but it is possible to give a rather simple analysis \cite{Xu:97} for a very special one dimensional  elliptic PDEs and two dimensional case \cite{CDS:17}. However, it is still not at all easy for the dense stiffness matrices  \cite{Arico:07,Arico:04,Bolten:15,CD:17}, unless we can reduce the problem to the Toeplitz setting and we know the symbol, its zeros, and their orders  \cite{Serra:02}. Instead we will focus our attention on first answering such a question for a two-level setting, since  it is useful from a theoretical point of view as the first step to study the MGM convergence usually begins from the two-grid method (TGM) \cite{Pang:12,Ruge:87,Saad:03,Xu:17}.

As is well known, a theoretical analysis of the related two-grid method is given in terms of the algebraic multigrid theory considered in  \cite{Ruge:87}. For solving Toeplitz systems, the convergence of the TGM on the first level is  proved for the so-called band $\tau$ matrices \cite{Fiorentino:96} and  a  complete analysis of convergence of the TGM is given for elliptic Toeplitz and PDEs Matrix-sequences \cite{Serra:02}.
For a class of   weakly diagonally dominant Toeplitz matrices, the uniform convergence   of the TGM are theoretically obtained \cite{CCS:98} and are extended to nonlocal operators \cite{CD:15,CWCD:14,Pang:12}.
For Toeplitz-plus-diagonal systems,  the numerical behaviour of MGM  have  been discussed in  \cite{NST:05} and the preconditioned  Krylov subspace methods including  conjugate gradient method have been proposed
in \cite{DMS:16,MDDM:17,PKNS:14,Wang:12}. It should be noted that the proof technique is different and in fact the related proofs do not rely on the diagonal dominance   \cite{Arico:07,Arico:04,CCS:98,Serra:02}. In reality,  the two-grid optimality (and sometimes the V-cycle optimality) is proven for Toeplitz matrices $T_n(f)$ where $f$ is nonnegative and has isolated zeros: the proof and the algorithm depend on the position and order of such zeros and most of such matrices are far from being diagonally dominant  \cite{Arico:07,Arico:04,Serra:02}.
For Toeplitz-plus-tridiagonal systems  arising from nonlocal problems, to the best of our knowledge,  no fast MGMs have been developed and no convergence analysis has been provided;
moreover,  there are still questions for MGM when is far from being weakly diagonally dominant \cite{Stuben:01}. In this paper, first we give a structural and spectral analysis of the underlying matrices. Then, based on the latter study, the simple (traditional) restriction operator and prolongation operator are employed in order to handle general algebraic systems: here the stiffness matrices of the resulting systems are  Toeplitz-plus-tridiagonal  and   far from being diagonally dominant, respectively, corresponding to the fractional Laplacian kernel and the constant kernel. We focus on  providing  the detailed proof of the convergence of the two-grid method for  such situations. Moreover, the convergence of the  full multigrid, i.e.,  recursive application of the TGM procedure \cite{CCS:98,Fiorentino:96,Serra:02}, is also discussed with the constant kernel. The performed numerical experiments show the effectiveness of the MGM with only  $\mathcal{O}(N \mbox{log} N)$ complexity by the fast Fourier transform, where $N$ is the number of the grid points.

The outline of this paper is as follows. In the next section, we derive the algebraic systems  for the  nonlocal problems arising from    linear  finite element approximations, we study their structural and spectral features, and we introduce the MGM algorithms. In Section 3, we study the uniform  convergence estimates of the TGM for the considered nonlocal problem with the fractional Laplacian kernel. Convergence of the TGM and  full MGM with  a constant kernel case is analyzed in Section 4. To show the effectiveness of the presented schemes, results of numerical experiments are reported in Section 5. Finally, we conclude the paper with some remarks.

\section{Preliminaries: numerical scheme}
In this section, we derive  the numerical discretization for nonlocal diffusion problems with the fractional Laplacian kernel and  the constant kernel, respectively.
Let $\Omega$ be a finite bar in $\mathbb{R}$.  The nonlocal  operator  is used in  the time-dependent nonlocal  diffusion problem   \cite{Andreu:10,Bates:06,Mengesha:13}
\begin{equation*}
\left\{ \begin{split}
 u_t(x,t) - \mathcal{L}u(x,t) &=f(x,t),    &  &  x \in \Omega,\, t>0,\\
                        u(x,0)&=u_0(x),    & &   x \in \Omega,\\
                        u(x,t)&=0,         & &   x \in \mathbb{R}\setminus \Omega,
 \end{split}
 \right.
\end{equation*}
 and its steady-state counterpart
\begin{equation}\label{2.1}
\left\{ \begin{split}
  - \mathcal{L} u(x)      &=f(x), &  & x \in \Omega,\\
                 u(x)&=0,        &    &x \in \mathbb{R}\setminus \Omega.
 \end{split}
 \right.
\end{equation}
Here, the  nonlocal operator $\mathcal{L}$  is defined by
\begin{equation*}
\begin{split}
\mathcal{L}u(x)=\int_{\Omega}[u(y)-u(x)]J(|x-y|)dy ~~\forall x \in \Omega,
\end{split}
\end{equation*}
where $J$ is a radial probability density with a nonnegative  symmetric dispersal kernel.

From \cite{Mengesha:13}, we obtain
\begin{equation*}
\begin{split}
(-\mathcal{L}u,v)
&=\int_{\Omega}v(x)\int_{\Omega}[u(x)-u(y)]J(|x-y|)dydx\\
&=\frac{1}{2}\int_{\Omega}\int_{\Omega}[u(y)-u(x)][v(y)-v(x)]J(|x-y|)dydx.
\end{split}
\end{equation*}
The   energy space associated with (\ref{2.1}) is defined as
\begin{equation*}
  \mathcal{S}(\Omega)=\{ u\in L^2(\Omega) \big| \int_{\Omega}\int_{\Omega}[u(y)-u(x)]^2J(|x-y|)dydx <\infty  \}
\end{equation*}
 and $\mathcal{S}_0(\Omega)=\{ u\in \mathcal{S}(\Omega)\big| u=0 ~{\rm in}~  \mathbb{R}\setminus \Omega \}.$

Formally, we can define a bilinear form $a(u,v):\mathcal{S}_0(\Omega) \times \mathcal{S}_0(\Omega)\rightarrow \mathbb{R} $ by
\begin{equation*}
  a(u,v)=\frac{1}{2}\int_{\Omega}\int_{\Omega}[u(y)-u(x)][v(y)-v(x)]J(|x-y|)dydx,
\end{equation*}
and the weak formulation of (\ref{2.1}) is expressed as follows: finding $u\in \mathcal{S}_0(\Omega)$ such that
\begin{equation}\label{2.2}
  a(u,v)=(f,v) ~~\forall v \in \mathcal{S}_0(\Omega).
\end{equation}

It was proved that the bilinear form $a(u,v)$ is coercive and bounded on the nonconventional Hilbert space $\mathcal{S}_0(\Omega)$ and second-order convergence can be expected for linear finite element, with sufficiently smooth functions \cite{Du:12,Mengesha:13}. Let $\Omega=(0,b)$ with the mesh points $x_i=ih$, $h=b/N$ and  $u_{i}$ as the numerical approximation of $u(x_i)$ and  $f_{i}=f(x_i)$. Denote $I_i=((i-1)h,ih)$ for $1 \leq i \leq N$, and the  piecewise linear  basis function is
\begin{equation*}
\phi_i(x)=\left\{ \begin{array}{lll}
 \displaystyle\frac{x-x_{i-1}}{h}, &  x \in [x_{i-1}, x_{i}],\\
 \displaystyle\frac{x_{i+1}-x}{h}, & x \in [x_{i}, x_{i+1}],\\
 \displaystyle 0, &  {\rm otherwise}
 \end{array}
 \right.
\end{equation*}
with $i = 1,2\ldots N-1$, and
\begin{equation*}
\phi_0(x)=\left\{ \begin{array}{lll}
 \displaystyle\frac{x_{1}-x}{h}, & x \in [x_{0}, x_{1}],\\
 \displaystyle 0, &  {\rm otherwise},
 \end{array}
 \right.
 ~~~~~~~~
\phi_{N}(x)=\left\{ \begin{array}{lll}
 \displaystyle\frac{x-x_{N-1}}{h}, & x \in [x_{N-1}, b],\\
 \displaystyle 0, &  {\rm otherwise}.
 \end{array}
 \right.
\end{equation*}

Let $\mathcal{S}_0^h(\Omega) \subset\mathcal{S}_0(\Omega)$ be the finite element space consisting of  piecewise linear  polynomials $\phi_i(x)$ with respect to the uniform  mesh.
The finite element approximation of the variational problem (\ref{2.2}) is expressed in accordance with the continuous setting: find $u_h\in \mathcal{S}_0^h(\Omega)$ such that
\begin{equation}\label{2.3}
  a(u_h,v_h)=(f,v_h) ~~\forall v_h \in \mathcal{S}_0^h(\Omega).
\end{equation}
Using   $u_h=\sum_{i=1}^{N-1}u_i\phi_i(x)$, we can rewrite (\ref{2.3}) as
\begin{equation}\label{2.4}
 \sum_{i=1}^{N-1} a(\phi_i,\phi_j)u_i=(f,\phi_j), ~~~ j=1,2,\ldots N-1,
\end{equation}
which is representable as a linear system of equations
\begin{equation}\label{2.5}
A_hu_h=f_h, ~~(A_h)_{i,j}=a_{i,j}=a(\phi_i,\phi_j),~~{\rm and}~~(f_h)_j=(f,\phi_j).
\end{equation}
Here, $u_h=[u_1,u_2,\ldots,u_{N-1}]^{\rm T}$ and the matrix $A$ is known as the stiffness matrix of the nodal basis $\{\phi_i\}_{i=1}^{N-1}$. More concretely,
\begin{equation}\label{2.6}
(A_h)_{i,j}=a_{i,j}=a(\phi_i,\phi_j)=\int_{x_{i-1}}^{x_{i+1}}\phi_i(x)\left[\int_{\Omega}[\phi_j(x)-\phi_j(y)]J(|x-y|)dy\right]dx.
\end{equation}

In this paper,  we mainly focus on two types of the    classical  kernel  functions for (\ref{2.1}),
i.e., the fractional Laplacian kernel \cite{Andreu:10,Du:12} and  the constant kernel \cite{Aksoylu:14,Tian:13}. More general kernel types \cite{Aksoylu:14,Andreu:10,Du:12,Tian:13}  can be similarly studied.

\subsection{Discretization scheme for (\ref{2.1}) with fractional Laplacian kernel: a Toeplitz-plus-tridiagonal  system} \label{ssec:1}

In this subsection, we choose   the fractional Laplacian kernel  $J(|x|)=C_\alpha/|x|^{1+\alpha}$ with $\alpha \in (1,2)$, then  (\ref{2.1}) reduces to
the following   integral version   fractional Laplacian:
\begin{equation}\label{2.7}
\left\{ \begin{split}
 (-\Delta)^{\frac{\alpha}{2}}u(x):=C_\alpha\int_{\Omega}\frac{u(x)-u(y)}{|x-y|^{1+\alpha}}dy       &=f(x), &  & x \in \Omega,\\
                 u(x)&=0,        &    &x \in \mathbb{R}\setminus \Omega
 \end{split}
 \right.
\end{equation}
with
$$C_\alpha=\frac{\alpha2^{\alpha-1}\Gamma(\frac{1+\alpha}{2})}{\pi^{1/2}\Gamma(1-\alpha/2)}=\kappa_\alpha \frac{-\alpha}{\Gamma(1-\alpha)}>0,~~ \kappa_\alpha=\frac{-1}{2\cos(\alpha\pi/2)}>0,~~\alpha \in (1,2).$$
It is should be noted that there exist several   equivalent definitions of the fractional Laplacian, agreing with the  space of appropriately smooth functions \cite{Kwasnicki:17}.

From (\ref{2.6}) and (\ref{2.7}),  the entries $a_{i,j}$ ($j\geq i+2$) of the  matrix are given by
\begin{equation*}
\begin{split}
& a_{i,j}=a(\phi_i,\phi_j)\\
& =C_\alpha \int_{x_{i-1}}^{x_i}\frac{x-x_{i-1}}{h}\left[\int_{x_{j-1}}^{x_j}\frac{0-\frac{y-x_{j-1}}{h}}{(y-x)^{1+\alpha}} dy+\int_{x_j}^{x_{j+1}}\frac{0-\frac{x_{j+1}-y}{h}}{(y-x)^{1+\alpha}} dy\right]dx\\
&\quad + C_\alpha\int_{x_i}^{x_{i+1}}\frac{x_{i+1}-x}{h}\left[\int_{x_{j-1}}^{x_j}\frac{0-\frac{y-x_{j-1}}{h}}{(y-x)^{1+\alpha}} dy+\int_{x_j}^{x_{j+1}}\frac{0-\frac{x_{j+1}-y}{h}}{(y-x)^{1+\alpha}} dy\right]dx\\
&=- C_\alpha h^{1-\alpha}\Bigg[\int_{0}^{1}t\int_{0}^{1}\frac{s}{(j-i+s-t)^{1+\alpha}}dsdt+\int_{0}^{1}t\int_{0}^{1}\frac{s}{(j-i+2-s-t)^{1+\alpha}}dsdt\\
&\quad+\int_{0}^{1}t\int_{0}^{1}\frac{s}{(j-i-2+s+t)^{1+\alpha}}dsdt+\int_{0}^{1}t\int_{0}^{1}\frac{s}{(j-i-s+t)^{1+\alpha}}dsdt \Bigg].
\end{split}
\end{equation*}
Similarly, we can obtain  $a_{i,i}$ and  $a_{i,i+1}=a_{i+1,i}$.  By calculation, it is easy to get
\begin{equation}\label{2.8}
  A_h=\frac{\kappa_\alpha}{h^{\alpha-1}\Gamma(4-\alpha)}B_h,
\end{equation}
where  the entries of the stiffness matrix $(B_h)_{i,j}=b_{i,j}$ are explicitly given by
\begin{equation}\label{2.9}
\begin{split}
b_{i,i}
&= \left(8-2^{4-\alpha}   \right)+\widetilde{b}_{i,i},~i=1,2,\ldots N-1,\\
b_{i,i+1}
&=b_{i+1,i}
= \left(-7-3^{3-\alpha} +2^{5-\alpha}  \right)+\widetilde{b}_{i,i+1},~i=1,2,\ldots N-1,\\
b_{i,j}
&=-\left( m+2 \right)^{3-\alpha}+4\left( m+1 \right)^{3-\alpha}\\
&\quad-6m^{3-\alpha}+4\left( m-1 \right)^{3-\alpha}-\left( m-2 \right)^{3-\alpha},~m=|j-i|\geq 2
\end{split}
\end{equation}
with
\begin{equation*}
\begin{split}
\widetilde{b}_{i,i}
=& 2\left[ \left(i+1   \right)^{3-\alpha}-2(3-\alpha)i^{2-\alpha} -\left(i-1   \right)^{3-\alpha}\right] \\
 &+2\left[ \left(N-i+1   \right)^{3-\alpha}-2(3-\alpha)(N-i)^{2-\alpha} -\left(N-i-1   \right)^{3-\alpha}\right],\\
\widetilde{b}_{i,i+1}
=& -2\left[ \left(i+1   \right)^{3-\alpha}-i^{3-\alpha}\right]+\left(3-\alpha\right)\left[ \left(i+1   \right)^{2-\alpha}+i^{2-\alpha}\right] \\
 &\!\!-2\left[ \left(N-i   \right)^{3-\alpha}\!-\!\left(N-i -1  \right)^{3-\alpha}\right]\!+\!\left(3-\alpha\right)\left[ \left(N-i   \right)^{2-\alpha}\!+\!\left(N-i -1  \right)^{2-\alpha}\right].
\end{split}
\end{equation*}
In fact, the stiffness matrix $A_h$ (or equivalently $B_h$) is a symmetric  Toeplitz-plus-tridiagonal matrix, i.e.,
\begin{equation*}
B_h=T_h+E_h.
\end{equation*}
Here
\begin{equation*}
T_h=\left [ \begin{matrix}
                      c_0           &      c_{1}             &      \cdots         &       c_{N-2}       \\
                      c_{1}         &      c_{0}              &      \cdots         &       c_{N-3}        \\
                     \vdots         &      \vdots             &      \ddots         &        \vdots            \\
                     c_{N-2}        &      c_{N-3}            &      \cdots         & c_{0}
 \end{matrix}
 \right ]
\end{equation*}
with
\begin{equation*}
\begin{split}
c_{0}
&= 8-2^{4-\alpha}  ,\\
c_{1}
&
= -7-3^{3-\alpha} +2^{5-\alpha} ,\\
c_{m}
&=-\left( m+2 \right)^{3-\alpha}+4\left( m+1 \right)^{3-\alpha}
-6m^{3-\alpha}+4\left( m-1 \right)^{3-\alpha}-\left( m-2 \right)^{3-\alpha},~m\geq 2,
\end{split}
\end{equation*}
and
\begin{equation*}
\begin{split}
E_h= \left [ \begin{matrix}
a_1      &    b_1           &            &                    \\
b_1     &    a_2            &  b_2        &           &         \\
       &   \ddots        &  \ddots    &   \ddots  &          \\
       &                 &  b_{N-3}       &       a_{N-2}   &      b_{N-2}    \\
       &                 &            &       b_{N-2}   &     a_{N-1}
\end{matrix}
\right ]_{(N-1)\times(N-1)}
\end{split}
\end{equation*}
with
\begin{equation*}
\begin{split}
&a_i=\widetilde{b}_{i,i},~~1\leq i\leq N-1,\\
&b_i=\widetilde{b}_{i,i+1},~~1\leq i\leq N-2.
\end{split}
\end{equation*}

\subsection{Discretization scheme for (\ref{2.1}) with constant kernel:  a non-diagonally dominant system}  \label{ssec:2}
For simplicity, we choose   the constant kernel  $J(|x|)=1$, then  (\ref{2.1}) reduces to the following  pseudo-differential equation
\begin{equation}\label{2.10}
\left\{ \begin{split}
   \int_{\Omega}\left[u(x)-u(y)\right]dy      &=f(x), &  & x \in \Omega,\\
                 u(x)&=0,        &    &x \in \mathbb{R}\setminus \Omega.
 \end{split}
 \right.
\end{equation}
Using   (\ref{2.6}) and (\ref{2.10}), it is immediate to obtain
\begin{equation}\label{2.11}
  A_h=h^2B_h
\end{equation}
with
\begin{equation*}
B_h = \left[ \begin{matrix}
\frac{2N}{3}-1    &  \frac{N}{6}-1 &    -1 &   \cdots  &   -1 \\
\frac{N}{6}-1     & \frac{2N}{3}-1   &   \ddots  &   \ddots   &   \vdots \\
-1 & \ddots  &   \ddots  &   \ddots   &   -1   \\
 \vdots    &\ddots  &   \ddots  &   \frac{2N}{3}-1       &   \frac{N}{6}-1    \\
-1     &  \cdots    &   -1  &     \frac{N}{6}-1      &   \frac{2N}{3}-1
 \end{matrix}
 \right ]_{(N-1)\times(N-1)},
\end{equation*}
i.e.,   the entries of the stiffness matrix $(B_h)_{i,j}=b_{i,j}$ are explicitly given by
\begin{equation}\label{2.12}
b_k=b_{|i-j|}=b_{i,j}= \left\{ \begin{array}{ll}
2N/3-1,   & k=0,\\
N/6-1,&  k=1,\\
-1,                                                                       &{\rm otherwise}.
 \end{array}
 \right.
\end{equation}

% Remark su Sherman-Morrison and discussion on $\alpha$

\subsection{Spectral analysis of the scaled matrices $B_h'= \frac{B_h}{N}$ in (\ref{2.11})} \label{ssec:3}

A matrix of size $n$, having a fixed entry along each diagonal, is called Toeplitz.
%and enjoys the expression
%\begin{align*}
%\left[a_{i-j}\right]_{i,j=1}^{n}=\begin{bmatrix}
%a_0 & a_{-1} & a_{-2} & \cdots & \cdots & a_{-(n-1)} \\
%a_1 & \ddots & \ddots & \ddots & & \vdots\\
%a_2 & \ddots & \ddots & \ddots & \ddots & \vdots\\
%\vdots & \ddots & \ddots & \ddots & \ddots & a_{-2}\\
%\vdots & & \ddots & \ddots & \ddots & a_{-1}\\
%a_{n-1} & \cdots & \cdots & a_2 & a_1 & a_0
%\end{bmatrix}.
%\end{align*}
 Given a complex-valued Lebesgue integrable function $\phi:[-\pi,\pi]\to\mathbb C$, the $n$-th Toeplitz matrix generated by $\phi$
 is defined as \cite{Chan:07}
\begin{align*}
T_n(\phi)=\bigl[\hat \phi_{i-j}\bigr]_{i,j=1}^n,
\end{align*}
where the quantities $\hat \phi_k$ are the Fourier coefficients of $\phi$, that is
\begin{align*}
\hat \phi_{k}=\frac1{2\pi}\int_{-\pi}^{\pi}\phi(\theta)\,{\rm e}^{-\mathbf{i}k\theta}{\rm d}\theta,\qquad k\in\mathbb Z.
\end{align*}
We refer to $\{T_n(\phi)\}_n$ as the Toeplitz sequence generated by $\phi$, which in turn is called the generating function of $\{T_n(\phi)\}_n$.
In the case where $\phi$ is real-valued, all the matrices $T_n(\phi)$ are Hermitian and much is known about their spectral properties.

More in detail, if $\phi$ is real-valued and not identically constant, then any eigenvalue of $T_n(\phi)$ belongs to the open set $(m_\phi, M_\phi)$, with $m_\phi$, $M_\phi$ being
the essential infimum, the essential supremum of $\phi$, respectively, see \cite{extreme}. The case of a constant $\phi$ is trivial: in that case if $\phi=m$ almost everywhere then $T_n(\phi)=m I_n$ with $I_n$ denoting the identity of size $n$.
 Hence if $M_\phi>0$ and $\phi$ is nonnegative almost everywhere, then $T_n(\phi)$ is Hermitian positive definite.

Now we consider $B_h'= \frac{B_h}{N}$, $B_h$ being the matrix defined in (\ref{2.11}). From the coefficients in (\ref{2.12}) and taking into account the definition of generating function above we have
\begin{equation}\label{rank-one-correction}
B_h' = T_{N-1}(g(\theta)) -  \frac{1}{N} e e^T,\quad \quad g(\theta)=\frac{2}{3}+\frac{1}{3}\cos(\theta)
\end{equation}
with $e^T=(1,1,\ldots,1)$ being the vector of all ones of size $N-1$.
By using the analysis in \cite{extreme} we know that the eigenvalues of  $T_{N-1}(g(\theta))$ belong to the open set
$( \frac{1}{3},1)$ since $\frac{1}{3}=\min g(\theta)$, $1=\max g(\theta)$. Furthermore, by ordering the eigenvalues non-increasingly, since  $T_{N-1}(g(\theta))$ belongs to a sine-transform algebra (the so-called $\tau$ algebra \cite{Se2}) we have
 \begin{equation}\label{eig t(g)}
\lambda_j(T_{N-1}(g(\theta))) = g\left( \frac{j\pi}{N}\right), \quad \quad j=1,\ldots, N-1.
\end{equation}
Here the rank-one correction  $- \frac{1}{N} e e^T$ is nonnegative definite with the unique nonzero eigenvalue equal to
the trace that is $- \frac{N-1}{N}$. Therefore the use of the Cauchy interlacing results implies
\begin{equation}\label{estimate1}
\frac{1}{3}<\lambda_{j+1}(T_{N-1}(g(\theta))) = g\left( \frac{(j+1)\pi}{N}\right)  \le  \lambda_j(B_h')  \le
\lambda_j(T_{N-1}(g(\theta)))  =g\left( \frac{j\pi}{N}\right)<1,
%\quad \quad
\end{equation}
for $j=1,\ldots, N-2$,
\begin{equation}\label{estimate2}
\frac{1}{3}-\frac{N-1}{N}
 \le  \lambda_{N-1}(B_h') \le
\lambda_{N-1}(T_{N-1}(g(\theta)))  =g\left( \frac{(N-1)\pi}{N}\right)\approx \frac{1}{3}.
\end{equation}
While the estimates in (\ref{estimate1}) are tight, the last estimate in (\ref{estimate2}) for the minimal eigenvalue is poor. We can improve it by exploiting the fact that the matrix $B_h$ is obtained from a Galerkin approximation of a coercive operator  and therefore
\[
\lambda_{N-1}(B_h') \in \left(0,\lambda_{N-1}(T_{N-1}(g(\theta)))\right), \quad \quad  \lambda_{N-1}(T_{N-1}(g(\theta)))=g\left( \frac{(N-1)\pi}{N}\right)\approx \frac{1}{3}.
\]
Still the localization is not precise and in the following we employ more advanced tools for evaluating the asymptotic behavior of the minimal eiganvalue and of the spectral conditioning of $B_h'$ (and hence of $B_h$).

First of all we exploit the low frequency vector $e$ in connection with the Rayleigh quotient for Hermitian matrices. We have
\begin{eqnarray*}
 \lambda_{N-1}(B_h') & = & \lambda_{\min}(B_h')  = \min_{x\neq 0} \frac{x^T B_h' x}{x^Tx}  \\
                                   & \le &  \frac{e^T B_h' e}{e^Te} =\frac{e^T T_{N-1}(g(\theta)) e -  \frac{1}{N} [e^T e]^2}{e^Te}  \\
                                   & = &  \frac{N-1 - \frac{1}{3}-\frac{(N-1)^2}{N}}{N-1} =\frac{2}{3N} +O(N^{-2}),
\end{eqnarray*}
and therefore
 \begin{equation}\label{estimate2-improved}
\lambda_{N-1}(B_h') \in \left(0,\frac{2}{3N} +O(N^{-2})\right),
\end{equation}
which implies that the coefficient matrix $B_h$ is asymptotically ill-conditioned and its conditioning grows at least as
$\frac{3N}{2}$.

In the following, using special properties of rank-one matrices, we show that the conditioning is exactly growing proportionally to $N$, which implies that the vector $e$ is a good approximation of the related eigenvector. We set $X=T_{N-1}(g(\theta))$ and given its invertibility we can write
\[
B_h'=X\left[I_{N-1}- \frac{1}{N} X^{-1}e e^T\right].
\]
Now the matrix $- \frac{1}{N} X^{-1}e e^T$ is still a rank one matrix and its unique nonzero eigenvalue coincides with its trace that is
\[
- \frac{1}{N} e^T X^{-1}e.
\]
From the latter we deduce
 \begin{equation}\label{det}
{\rm det}(B_h') = {\rm det}(X) {\rm det}\left(I_{N-1}- \frac{1}{N} X^{-1}e e^T\right)={\rm det}(X)
\left(1-\frac{1}{N} e^T X^{-1}e\right).
\end{equation}
Finally, since
\[
 \lambda_{N-1}(B_h')  = \lambda_{\min}(B_h') =\frac{{\rm det}(B_h')}{ \lambda_{1}(B_h')\cdots  \lambda_{N-2}(B_h')  },
\]
from (\ref{estimate1}), we obtain
\[
\lambda_{\min}(X)\left(1-\frac{1}{N} e^T X^{-1}e\right)  \le  \lambda_{\min}(B_h')
\le \lambda_{\max}(X)\left(1-\frac{1}{N} e^T X^{-1}e\right)
\]
with
\[
\lambda_{\min}(X)=  g\left( \frac{(N-1)\pi}{N}\right)=\frac{1}{3}+\mu(N),\quad
\lambda_{\max}(X)= g\left( \frac{\pi}{N}\right)=1-\nu(N)
\]
with $\mu(N),\nu(N)\sim N^{-2}$.

In other words, up to a quantity in the interval $(\frac{1}{3},1)$, the minimal eigenvalue of $B_h'$ is asymptotic to
$1-\frac{1}{N} e^T X^{-1}e$ where a detailed but curbersome analysis leads to
\[
\left(1-\frac{1}{N} e^T X^{-1}e\right)N \sqrt 3= 1 +O(N^{-1})
\]
so that
 \begin{equation}\label{estimate2-improved-BIS}
\lambda_{N-1}(B_h') \sim 1-\frac{1}{N} e^T X^{-1}e \approx \frac{1}{\sqrt 3 N} + O(N^{-2}).
\end{equation}

We now comment the results obtained in this subsection.

\begin{description}
\item[A)]From the relation (\ref{rank-one-correction}), we observe that neither $B_h$ nor $B_h'$ can be written as
 a Toeplitz matrix generated by a symbol independent of $N$: hence the proof techniques developed for the TGM/MGM optimality developed in \cite{Arico:04,Serra:02}  cannot be applied directly. In section \ref{MGM-lapl}, we proceed with an alternative approach.
\item[B)] Again  relation (\ref{rank-one-correction}) implies that the matrix $B_h'$ is a rank-one correction of the well conditioned Toeplitz matrix $T_{N-1}(g(\theta))$ (with spectral conditioning strictly bounded by $3$). Therefore a corresponding linear system can be solved with linear complexity by using the Shermann-Morrison formula or a preconditioned conjugate gradient with preconditioner given exactly by $T_{N-1}(g(\theta))$. However the proof technique used in this setting in the following is also adaptable to the case of a general $\alpha$ for which both the rank structure and the Toeplitz structure are not preserved.
\item[C)] The analysis presented in the previous lines tells one that the matrix $B_h'$ is asymptotically ill-conditioned and that the responsible of the ill-conditioning is the vector $e$ which is a special instance of a low-frequency vector.
We recall that in the discretizaton of elliptic equations the low frequency subspace is associated with the small eigenvalues. Hence, the latter observation suggests that appropriate restriction operator and prolongation operators can be chosen as the traditional ones for elliptic problems. This choice will be employed in the next sections.
\end{description}

\subsection{Multigrid method} \label{ssec:4}
Given a algebraic system $A_hu_h=f_h$, where  $u_h \in \mathcal{R}^{n_q}$ and  $n_q$ is the size of the matrix $A_h$.
We define a sequence of subsystems on different levels
$$A_mu_m=f_m, ~u_m \in \mathcal{R}^{n_m}, ~m=1 : q.$$
Here $q$ is the total number of levels, with $m=q$ being the finest level, i.e., $A_q=A_h$.
For $m\geq 1$, $n_m$ are just  the size of the matrix $A_m$.

The traditional (simple)  restriction operator $I_m^{m-1}$ and prolongation operator $I_{m-1}^m$  are, respectively, defined by
 \begin{equation*}
\begin{split}
\nu^{m-1}=I_m^{m-1}\nu^m ~~{\rm with}~~ \nu_i^{m-1}=\frac{1}{4}\left(\nu_{2i-1}^{m}+2\nu_{2i}^{m}+\nu_{2i+1}^{m}\right),
~~~i=1:\mathcal{R}^{n_{m-1}}
\end{split}
\end{equation*}
and
 \begin{equation*}
\begin{split}
\nu^{m}=I_{m-1}^{m}\nu^{m-1}~~{\rm with}~~ I_{m-1}^{m} =2\left(I_m^{m-1}\right)^T.
\end{split}
\end{equation*}
We use the coarse grid operators defined by the Galerkin approach  \cite[p.\,455]{Saad:03}
\begin{equation}\label{2.13}
  A_{m-1}=I_m^{m-1}A_mI_{m-1}^{m},
\end{equation}
and for all the intermediate $(m,m-1)$ coarse grids we apply the correction operators \cite[p.\,87]{Ruge:87}
\begin{equation*}
  T^m=I_m- I_{m-1}^{m}A_{m-1}^{-1}I_m^{m-1}A_m=I_m- I_{m-1}^{m}P_{m-1}
\end{equation*}
with
\begin{equation*}
  P_{m-1}=A_{m-1}^{-1}I_m^{m-1}A_m.
\end{equation*}
We  choose  the damped Jacobi iteration matrix by \cite[p.\,9]{Briggs:00}
\begin{equation}\label{2.14}
  K_m=I-S_mA_m~~{\rm with}~~S_{m}:=S_{m,\omega}=\omega D_m^{-1}
\end{equation}
with a weighting  factor $\omega$, and $D_m$ is the diagonal of $A_m$.

A multigrid process can be regarded as defining a sequence of operators $B_m: \mathcal{R}^{n_m}\mapsto  \mathcal{R}^{n_m}$
which is an approximate inverses of $A_m$ in the sense that $||I-B_mA_m||$ is bounded away from one.
The V-cycle multigrid algorithm \cite{Xu:97} is provided in Algorithm \ref{MGM}.
If  $m=2$, the resulting  Algorithm  \ref{MGM} is TGM.
\begin{algorithm*}
\caption{ V-cycle Multigrid Algorithm: Define $B_1=A_1^{-1}$. Assume that $B_{m-1}: \mathcal{R}^{n_{m-1}}\mapsto \mathcal{R}^{n_{m-1}}$ is defined.
We shall now define $B_m:\mathcal{R}^{n_m}\mapsto \mathcal{R}^{n_m}$ as an approximate iterative solver for the equation associated with $A_m\nu_m=f_m$.}
\label{MGM}
\begin{algorithmic}[1]
\STATE Presmooth: Let $S_{m,\omega}$ be defined by (\ref{2.14}) and   $\nu_m^0=0$, $l=1: m_1$
  $$\nu_m^l=\nu_m^{l-1}+S_{m,\omega_{pre}}(f_m-A_m\nu_m^{l-1}).$$
\STATE Coarse grid correction: $e^{m-1} \in \mathcal{R}^{n_{m-1}}$ is the approximate solution of the residual equation $A_{m-1}e=I_m^{m-1}(f_m-A_m\nu_m^{m_1})$
by the iterator $B_{m-1}$:
$$e^{m-1}=B_{m-1}I_m^{m-1}(f_m-A_m\nu_m^{m_1}).$$
\STATE Postsmooth:~~$\nu_m^{m_1+1}=\nu_m^{m_1}+I_{m-1}^{m}e^{m-1}$  and $l=m_1+2: m_1+ m_2$
$$\nu_m^l=\nu_m^{l-1}+S_{m,\omega_{post}}(f_m-A_m\nu_m^{l-1}).$$
\STATE Define $B_mf_m=\nu_m^{m_1+m_2}$.
\end{algorithmic}
\end{algorithm*}

\section{Convergence of TGM for  (\ref{2.7}): a Toeplitz-plus-tridiagonal  system}
Now, we start to prove the convergence of the TGM for nonlocal problem  (\ref{2.7}) with fractional Laplacian kernel, which is a special Toeplitz-plus-tridiagonal  system. First, we give some Lemmas that will be used.
Since the matrix $A_h$ is symmetric positive definite, we can define the following inner products
\begin{equation*}
  ( u,v  )_D=(Du,v), \quad (u,v)_{A}=(Au,v), \quad (u,v)_{AD^{-1}A}=(Au,Av)_{D^{-1}},
\end{equation*}
where $A:=A_q=A_h$ and $D$ is its diagonal and  $(\cdot,\cdot)$ is the usual Euclidean inner product.
\begin{lemma} \cite[p.\,84]{Ruge:87}\label{lemma3.1}
Let $A_m$ be a symmetric positive definite. If $\eta\leq\omega(2-\omega \eta_0)$ with $\eta_0\geq\lambda_{\max}(D_m^{-1}A_m) $,  then the damped Jacobi iteration
with relaxation parameter $0 < \omega < 2/\eta_0 $ satisfies
\begin{equation}\label{3.1}
 ||K_m\nu^m||_{A_m }^2 \leq  ||\nu^m||_{A_m }^2 - \eta ||A_m \nu^m||_{D_m ^{-1}}^2 \quad  \forall \nu^m \in \mathcal{R}^{n_m},~ m=1:q.
\end{equation}
\end{lemma}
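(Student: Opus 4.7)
The plan is to verify this classical smoothing property of weighted Jacobi by direct algebraic expansion of the $A_m$-norm of $K_m \nu^m$, followed by a single spectral bound that uses the assumption $\eta_0 \geq \lambda_{\max}(D_m^{-1}A_m)$. Throughout, fix the level $m$ and write $A := A_m$, $D := D_m$, $K := K_m = I - \omega D^{-1} A$, with $A$ symmetric positive definite and $D$ the (positive) diagonal of $A$.

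First I would expand $\|K\nu\|_{A}^{2} = \langle A(I - \omega D^{-1}A)\nu,\,(I - \omega D^{-1}A)\nu\rangle$ and use the symmetry of $A$ and $D^{-1}$ to collect the three resulting terms. Both cross terms collapse to $\omega\,\langle D^{-1}A\nu,A\nu\rangle = \omega\,\|A\nu\|_{D^{-1}}^{2}$, yielding
\begin{equation*}
\|K\nu\|_{A}^{2} = \|\nu\|_{A}^{2} - 2\omega\,\|A\nu\|_{D^{-1}}^{2} + \omega^{2}\,\langle A D^{-1}A\nu,\,D^{-1}A\nu\rangle.
\end{equation*}
The whole statement then reduces to showing that the last (quadratic) term is controlled by $\omega^{2}\eta_{0}\,\|A\nu\|_{D^{-1}}^{2}$, after which one simply regroups as $\omega(2-\omega\eta_{0})\,\|A\nu\|_{D^{-1}}^{2}$ and invokes the assumption $\eta \le \omega(2-\omega\eta_{0})$ together with the nonnegativity of $\|A\nu\|_{D^{-1}}^{2}$; the condition $0<\omega<2/\eta_{0}$ just ensures that this coefficient, and hence an admissible $\eta$, is positive.

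The only nontrivial step is the spectral bound on $\langle A D^{-1}A\nu,\,D^{-1}A\nu\rangle$. Here I would exploit that $D^{-1}A$ is similar to the symmetric matrix $\widehat A := D^{-1/2} A D^{-1/2}$, so $\lambda_{\max}(\widehat A) = \lambda_{\max}(D^{-1}A) \le \eta_{0}$, hence $\widehat A \preceq \eta_{0}\,I$ in the Loewner order. Substituting $w = D^{-1/2}u$ in $\langle \widehat A u, u\rangle \le \eta_{0}\|u\|^{2}$ gives the universal inequality $\langle A w, w\rangle \le \eta_{0}\,\langle D w, w\rangle$ for every $w\in\mathbb{R}^{n_m}$. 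Applying this with $w := D^{-1}A\nu$ yields
\begin{equation*}
\langle A D^{-1}A\nu,\,D^{-1}A\nu\rangle \;\le\; \eta_{0}\,\langle D\,D^{-1}A\nu,\,D^{-1}A\nu\rangle \;=\; \eta_{0}\,\|A\nu\|_{D^{-1}}^{2},
\end{equation*}
which plugged back into the expansion produces $\|K\nu\|_{A}^{2} \le \|\nu\|_{A}^{2} - \omega(2-\omega\eta_{0})\,\|A\nu\|_{D^{-1}}^{2}$ and then the claim.

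The main obstacle is really only bookkeeping: one has to be careful that $A$ and $D^{-1}$ are both symmetric so that the cross terms identify cleanly as $\|A\nu\|_{D^{-1}}^{2}$, and one has to justify passing from $\lambda_{\max}(D^{-1}A)\le \eta_{0}$ (a non-symmetric product) to the Loewner inequality $A \preceq \eta_{0}\,D$ via the symmetric conjugate $\widehat A$. Once that spectral passage is in place, the rest of the argument is a one-line completion of the square in the variable $\omega$.
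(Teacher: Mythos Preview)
Your argument is correct and is precisely the standard derivation of the Jacobi smoothing property. Note that the paper does not supply its own proof of this lemma: it is quoted from \cite[p.\,84]{Ruge:87}, so there is no alternative approach in the paper to compare against; your expansion-plus-spectral-bound argument is exactly the one found in that reference.
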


\begin{lemma}\cite{CCS:98,Fiorentino:96,Ruge:87}\label{lemma3.2}
Let $A_m$ be a symmetric positive definite matrix and $K_m$ satisfies (\ref{3.1}) and
\begin{equation}\label{3.2}
   \min_{\nu^{m-1} \in  \mathcal{R}^{n_{m-1}} }||\nu^{m}-I_{m-1}^m\nu^{m-1}||_{D_m}^2\leq \kappa ||\nu^m||_{A_m}^2 \quad  \forall \nu^m \in \mathcal{R}^{n_m},~ m=1:q
\end{equation}
with  $\kappa>0$ independent of $\nu^m$. Then, $\kappa\geq \eta>0$ and the convergence factor of full MGM  satisfies
\begin{equation*}
||K_mT^m||_{A_m}\leq \sqrt{1-\eta/\kappa }\quad \forall \nu^m\in \mathcal{R}^{n_m},~ m=1:q.
\end{equation*}
In  particular,  the convergence factor of TGM  satisfies
\begin{equation*}
||K_qT^q||_{A_q}\leq \sqrt{1-\eta/\kappa }\quad \forall \nu^q\in \mathcal{R}^{n_q}.
\end{equation*}
\end{lemma}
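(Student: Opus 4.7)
My plan is to prove the bound $\|K_mT^m\|_{A_m}\le\sqrt{1-\eta/\kappa}$ at an arbitrary level $m$; since the hypotheses (\ref{3.1})--(\ref{3.2}) are supplied at every level and the operator $T^m$ couples only levels $m$ and $m-1$, the uniform statement for $m=1:q$ and the TGM specialization $m=q$ follow at once. The key structural fact is that the Galerkin prescription $A_{m-1}=I_m^{m-1}A_m I_{m-1}^m$, together with the transpose-scaling relation $I_{m-1}^m=2(I_m^{m-1})^{\mathrm T}$, makes $A_{m-1}$ symmetric positive definite and promotes $I_{m-1}^m P_{m-1}=I_{m-1}^m A_{m-1}^{-1}I_m^{m-1}A_m$ to the $A_m$-orthogonal projection onto $\mathrm{range}(I_{m-1}^m)$. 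Hence $T^m=I_m-I_{m-1}^m P_{m-1}$ is the complementary $A_m$-orthogonal projection, from which I extract the two ingredients I need: the contractivity $\|T^m\nu^m\|_{A_m}\le\|\nu^m\|_{A_m}$ and the Galerkin orthogonality $(A_mT^m\nu^m,\,I_{m-1}^m w^{m-1})=0$ for every $w^{m-1}\in\mathcal{R}^{n_{m-1}}$.

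The heart of the argument is a three-step chain. Using the orthogonality, for any $w^{m-1}$ I rewrite $\|T^m\nu^m\|_{A_m}^2=(A_mT^m\nu^m,\,T^m\nu^m-I_{m-1}^m w^{m-1})$, and then apply the weighted Cauchy--Schwarz inequality $(x,y)\le\|x\|_{D_m^{-1}}\|y\|_{D_m}$ to obtain
\[
\|T^m\nu^m\|_{A_m}^2\le\|A_mT^m\nu^m\|_{D_m^{-1}}\,\|T^m\nu^m-I_{m-1}^m w^{m-1}\|_{D_m}.
\]
Taking the infimum over $w^{m-1}$ and inserting the approximation property (\ref{3.2}) applied to $T^m\nu^m$ collapses this to the ``reverse'' estimate $\|T^m\nu^m\|_{A_m}^2\le\kappa\,\|A_mT^m\nu^m\|_{D_m^{-1}}^2$. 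Feeding this bound into the smoothing property (\ref{3.1}) applied to $T^m\nu^m$ produces
\[
\|K_mT^m\nu^m\|_{A_m}^2\le\|T^m\nu^m\|_{A_m}^2-\eta\,\|A_mT^m\nu^m\|_{D_m^{-1}}^2\le\left(1-\frac{\eta}{\kappa}\right)\|T^m\nu^m\|_{A_m}^2\le\left(1-\frac{\eta}{\kappa}\right)\|\nu^m\|_{A_m}^2,
\]
which is the desired bound, the last inequality using the contractivity of $T^m$. The side conclusion $\kappa\ge\eta$ drops out by chaining $\eta\,\|A_mT^m\nu^m\|_{D_m^{-1}}^2\le\|T^m\nu^m\|_{A_m}^2\le\kappa\,\|A_mT^m\nu^m\|_{D_m^{-1}}^2$ at any $\nu^m$ with $A_mT^m\nu^m\ne 0$.

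I expect the main technical difficulty to be the bridging step that converts the $D_m$-norm bound (\ref{3.2}) into the $A_m$-versus-$D_m^{-1}$ comparison $\|T^m\nu^m\|_{A_m}^2\le\kappa\,\|A_mT^m\nu^m\|_{D_m^{-1}}^2$, since this is where the Galerkin orthogonality, the weighted Cauchy--Schwarz inequality, and the approximation hypothesis must all fit together compatibly. By contrast, verifying that $T^m$ is an $A_m$-orthogonal projection is a routine consequence of the symmetry of $A_{m-1}$ under the scaled transfer operators, and no inter-level recursion is required because the smoothing and approximation hypotheses are posited independently at each level of the hierarchy.
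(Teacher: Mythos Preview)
Your argument is correct and is precisely the classical Ruge--St\"uben two-level analysis: exploit the Galerkin relation to make $T^m$ an $A_m$-orthogonal projection, use Galerkin orthogonality plus the $D_m/D_m^{-1}$ Cauchy--Schwarz to convert the approximation property (\ref{3.2}) into $\|T^m\nu^m\|_{A_m}^2\le\kappa\,\|A_mT^m\nu^m\|_{D_m^{-1}}^2$, and then combine with the smoothing inequality (\ref{3.1}). Note, however, that the paper does \emph{not} supply its own proof of Lemma~\ref{lemma3.2}: the result is simply quoted from \cite{CCS:98,Fiorentino:96,Ruge:87}, so there is no in-paper argument to compare against. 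Your write-up is essentially the proof one finds in those references (in particular \cite[\S4]{Ruge:87}); the paper's phrasing ``convergence factor of full MGM'' is being used in the sense of a uniform-in-$m$ two-level bound, which is exactly what you establish.
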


\begin{lemma}\label{lemma3.3}
Let  $1<\alpha<2$.  The coefficients $b_{i,j}$ with $1\leq i,j \leq N-1$ in (\ref{2.9}) satisfy
\begin{equation*}
\begin{split}
&(1) ~~ b_{i,j} < 0,~j\neq i~~{\rm and}~~\widetilde{b}_{i,i+1}<0,~\widetilde{b}_{i,i}<0,\\
&(2) ~~ \sum\limits_{j=1}^{N-1}b_{i,j}> 0 ~~{\rm and}~~b_{i,i}>\sum\limits_{j\neq i}|b_{i,j}|>0.
\end{split}
\end{equation*}
\end{lemma}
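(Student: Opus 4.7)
The plan is to rewrite each entry in (2.9) as an integral expression in the two functions $f(t)=t^{3-\alpha}$ and $g(t)=t^{2-\alpha}$ on $(0,\infty)$, and then to read off signs from the sign pattern of their derivatives. The indispensable facts for $\alpha\in(1,2)$ are
\[
g''(t) = (2-\alpha)(1-\alpha)\,t^{-\alpha} < 0, \qquad f^{(4)}(t) = (3-\alpha)(2-\alpha)(1-\alpha)(-\alpha)\,t^{-1-\alpha} > 0,
\]
where $f^{(4)}>0$ because two of its four factors carry a negative sign.

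For part (1) I would treat the three off-diagonal families in turn. For the Toeplitz coefficients $b_{i,j}$ with $m:=|j-i|\ge 2$, the five-point combination in (2.9) is exactly $-\Delta^4 f(m-2)$, and the Peano-kernel / iterated-integral identity
\[
\Delta^4 f(x) = \int_0^1\!\!\int_0^1\!\!\int_0^1\!\!\int_0^1 f^{(4)}(x+s_1+s_2+s_3+s_4)\,ds_1 ds_2 ds_3 ds_4
\]
together with $f^{(4)}>0$ forces $b_{i,j}<0$. For $\widetilde b_{i,i+1}$ and $\widetilde b_{i,i}$ I would use $(k+1)^{3-\alpha}-k^{3-\alpha}=(3-\alpha)\int_k^{k+1}g(t)\,dt$ and $(i+1)^{3-\alpha}-(i-1)^{3-\alpha}=(3-\alpha)\int_{i-1}^{i+1}g(t)\,dt$ to rewrite them as $2(3-\alpha)$ times, respectively, a sum of two \emph{trapezoidal-rule errors} for $g$ on unit intervals and a sum of two \emph{midpoint-rule errors} for $g$ on intervals of length two. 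Concavity of $g$ makes both errors strictly negative (trapezoidal under-estimates and midpoint over-estimates the integral of a concave function), so $\widetilde b_{i,i+1}<0$ and $\widetilde b_{i,i}<0$. Finally $b_{i,i+1}=c_1(\alpha)+\widetilde b_{i,i+1}$ with $c_1(\alpha):=2^{5-\alpha}-3^{3-\alpha}-7$; since $\widetilde b_{i,i+1}<0$ it suffices to check $c_1(\alpha)\le 0$ on $(1,2)$, which follows from $c_1(1)=0$ together with the monotonicity $c_1'(\alpha)=3^{3-\alpha}\ln 3-2^{5-\alpha}\ln 2<0$ on $[1,2]$.

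For part (2) the two stated inequalities are equivalent once (1) is in hand: because every off-diagonal entry is strictly negative, $\sum_{j\neq i}|b_{i,j}|=-\sum_{j\neq i}b_{i,j}$, so $b_{i,i}>\sum_{j\neq i}|b_{i,j}|$ becomes the row-sum inequality $R_i:=\sum_{j=1}^{N-1}b_{i,j}>0$. I would derive $R_i>0$ from the variational origin of $A_h$: the bilinear form $a(\cdot,\cdot)$ in (2.2) satisfies $a(\phi_i,1)=0$ because its integrand contains $v(y)-v(x)$, and the hat functions form a partition of unity $\phi_0+\sum_{j=1}^{N-1}\phi_j+\phi_N\equiv 1$ on $\overline\Omega$, so
\[
\sum_{j=1}^{N-1}a(\phi_i,\phi_j)=-\,a(\phi_i,\phi_0)-a(\phi_i,\phi_N).
\]
When the supports are disjoint (i.e.\ $i\ge 2$ for $\phi_0$ and $i\le N-2$ for $\phi_N$), formula (2.6) collapses to $-\int\phi_i(x)\int\phi_0(y)J(|x-y|)\,dy\,dx<0$; the remaining overlap cases $i=1$ with $\phi_0$ and $i=N-1$ with $\phi_N$ are handled by rerunning the multi-integral derivation of Subsection 2.1 that produced $b_{i,i+1}$, but with one interior hat replaced by a boundary half-hat, and applying the same trapezoidal/midpoint/fourth-difference sign analysis already used in (1). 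Hence $R_i>0$, which simultaneously yields $\sum_j b_{i,j}>0$, the positivity $b_{i,i}>0$, and the strict diagonal dominance $b_{i,i}>\sum_{j\neq i}|b_{i,j}|>0$.

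The main obstacle is the boundary-overlap analysis of $a(\phi_1,\phi_0)$ and $a(\phi_{N-1},\phi_N)$: unlike the disjoint-support case one cannot read off the sign from a single positive integral, so one must carry through the half-hat analogue of the derivation of $b_{i,i+1}$ and verify a boundary analogue of the check $c_1(\alpha)\le 0$.
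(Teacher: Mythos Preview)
Your argument for part (1) is correct and takes a genuinely different route from the paper. The paper establishes $b_{i,j}<0$ for $m=|i-j|\ge 2$, $\widetilde b_{i,i+1}<0$, and $\widetilde b_{i,i}<0$ by expanding each expression into a binomial/Taylor series in $1/m$ (resp.\ $1/i$) and checking the sign of every coefficient. Your approach---recognising $-\Delta^4 f(m-2)$ with $f^{(4)}>0$, and recasting $\widetilde b_{i,i+1}$, $\widetilde b_{i,i}$ as $2(3-\alpha)$ times trapezoidal and midpoint errors for the strictly concave $g(t)=t^{2-\alpha}$---is cleaner and more conceptual, and avoids the somewhat delicate pairing of series terms the paper performs. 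Both approaches need the elementary side fact $c_1(\alpha)=2^{5-\alpha}-3^{3-\alpha}-7\le 0$; the paper simply asserts it, while you supply the monotonicity argument.

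For part (2) you and the paper share the same backbone: the partition of unity $\sum_{j=0}^N\phi_j\equiv 1$ turns the row sum into $-a(\phi_i,\phi_0)-a(\phi_i,\phi_N)$, and disjoint supports give positivity for $2\le i\le N-2$. The difference is in the boundary row $i=1$ (and symmetrically $i=N-1$). The paper does \emph{not} attempt a half-hat analogue of the $b_{i,i+1}$ analysis; instead it simply discards the nonnegative $\phi_N(y)$-contribution and the nonnegative piece on $[x_1,x_2]$, and then evaluates the remaining integral in closed form,
\[
C_\alpha\int_{x_0}^{x_1}\phi_1(x)\int_\Omega\frac{-\phi_0(x)+\phi_0(y)}{|y-x|^{1+\alpha}}\,dy\,dx
=\frac{C_\alpha h^{1-\alpha}}{(2-\alpha)(3-\alpha)}>0.
\]
This one-line computation settles the boundary case. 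Your proposed route---rerunning the multi-integral derivation with a half-hat and hoping the trapezoidal/midpoint/fourth-difference machinery transfers---is heavier than necessary, and you rightly flag it as the main obstacle: the mixed-sign structure of $[\phi_1(y)-\phi_1(x)][\phi_0(y)-\phi_0(x)]$ on the overlap makes it unclear that the quadrature-error viewpoint alone yields the needed sign on $a(\phi_1,\phi_0)$ without an extra explicit check. I would replace that step with the paper's direct evaluation.
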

\begin{proof}
(\uppercase\expandafter{\romannumeral1}):
We first  prove $b_{i,j}<0$ with $m=|j-i|\geq 2$. From (\ref{2.9}), we have
\begin{equation*}
\begin{split}
b_{i,j}
&=-m^{3-\alpha}\left[ \left( 1+\frac{2}{m} \right)^{3-\alpha}\!\!\!-4\left( 1+\frac{1}{m} \right)^{3-\alpha}+6-4\left( 1-\frac{1}{m} \right)^{3-\alpha}\!\!\!+\left( 1-\frac{2}{m} \right)^{3-\alpha}\right]\\
&=-m^{3-\alpha}\sum_{n=2}^\infty \left ( \begin{matrix}
3-\alpha             \\
2n
\end{matrix}
\right )\left( 2^{2n+1}-8 \right)\frac{1}{m^{2n}}<0,~~\alpha \in (1,2).
\end{split}
\end{equation*}

(\uppercase\expandafter{\romannumeral2}):
We next  prove  $b_{i,i+1}<0$ or $\widetilde{b}_{i,i+1}<0$. Since  $-7-3^{3-\alpha} +2^{5-\alpha}<0$ in $b_{i,i+1}$,    we just need to   check that  the  first term of $\widetilde{b}_{i,i+1}$ is less than zero.
Let $x=\frac{1}{i} \in (0,1]$ with $i=1,2,\ldots N-1$.
Setting $d(x)=\frac{2}{x}\left( \left(  1+x \right)^{3-\alpha} -1\right) -(3-\alpha)\left( \left(  1+x\right)^{2+\alpha} +1\right)$, we deduce
\begin{equation*}
\begin{split}
d(x)
&=\sum_{n=1}^\infty (3-\alpha)(2-\alpha)\ldots (2-\alpha-n) \left(  \frac{2}{(n+2)!}-\frac{1}{(n+1)!} \right)x^{n+1}\\
&=\sum_{n=1}^\infty g_n(x)\frac{x^{2n}}{(n+1)!}>0,~x \in (0,1],
\end{split}
\end{equation*}
where the last equality follows of the equation above when combining the coefficients of  $x^{2n}$ and  $x^{2n+1}$ and where it is easy to check that $g_n(x)$ is strictly positive, with
\begin{equation*}
g_n(x)=(3-\alpha)(2-\alpha)\ldots (3-\alpha-2n)\left(  2-(2n+1)+(2-\alpha-2n)\left( \frac{2}{2n+2} -1\right)x\right).
\end{equation*}
Hence, using (\ref{2.9}) and the positivity of $d(x)$, we obtain
\begin{equation*}
\begin{split}
& -2\left( \left(i+1   \right)^{3-\alpha}-i^{3-\alpha}\right)+\left(3-\alpha\right)\left( \left(i+1   \right)^{2-\alpha}+i^{2-\alpha}\right)\\
&=-i^{2-\alpha}\left[2i\left( \left(  1+\frac{1}{i}\right)^{3-\alpha} -1\right) -(3-\alpha)\left( \left(  1+\frac{1}{i}\right)^{2+\alpha} +1\right)\right]\\
&=-i^{2-\alpha}d(x)<0.
\end{split}
\end{equation*}
To conclude, we have  $\widetilde{d}(N-i)<0$, which implies $\widetilde{b}_{i,i+1}<0$. Using $-7-3^{3-\alpha} +2^{5-\alpha}<0$ and (\ref{2.9}), we deduce  $b_{i,i+1}<0$.

(\uppercase\expandafter{\romannumeral3}):
To prove   $\widetilde{b}_{i,i}<0$,  we need to  verify
\begin{equation*}
  2\left[ \left(i+1   \right)^{3-\alpha}-2(3-\alpha)i^{2-\alpha} -\left(i-1   \right)^{3-\alpha}\right]=2i^{2-\alpha}\widetilde{p}(i)<0
\end{equation*}
with $\widetilde{p}(i)=i \left(  1+\frac{1}{i}\right)^{3-\alpha}-i \left(  1-\frac{1}{i}\right)^{3-\alpha} -2(3-\alpha)$.
Let $x=\frac{1}{i} \in (0,1]$, we can also prove
$p(x)=\frac{1}{x}\left( \left(  1+x \right)^{3-\alpha} - \left(  1-x \right)^{3-\alpha}\right) -2(3-\alpha)<0 $. Since
\begin{equation*}
\begin{split}
p(x)=\sum_{n=1}^\infty q_nx^{2n},~x \in (0,1]
\end{split}
\end{equation*}
with
$q_n=\frac{2(3-\alpha)(2-\alpha)\ldots (3-\alpha-2n)}{(2n+1)!}<0.$

(\uppercase\expandafter{\romannumeral4}):
To the end, we prove $\sum\limits_{j=1}^{N-1}b_{i,j}> 0$. According to  $\sum_{j=1}^{N-1}\phi_j(x)=1-\phi_0(x)-\phi_N(x)$ and (\ref{2.6})-(\ref{2.9}), there exists
\begin{equation*}
\begin{split}
\sum_{j=1}^{N-1}a_{i,j}
&=  \frac{\kappa_\alpha}{h^{\alpha-1}\Gamma(4-\alpha)}\sum_{j=1}^{N-1}b_{i,j}\\
&=C_\alpha\int_{x_{i-1}}^{x_{i+1}}\phi_i(x)\int_{\Omega}\frac{\sum_{j=1}^{N-1}\phi_j(x)-\sum_{j=1}^{N-1}\phi_j(y)}{|y-x|^\alpha}dydx\\
&=C_\alpha\int_{x_{i-1}}^{x_{i+1}}\phi_i(x)\int_{\Omega}\frac{-\phi_0(x)-\phi_N(x)+\phi_0(y)+\phi_N(y)}{|y-x|^\alpha}dydx.
\end{split}
\end{equation*}
Thus, we have
\begin{equation*}
\begin{split}
\sum_{j=1}^{N-1}a_{i,j}
=C_\alpha\int_{x_{i-1}}^{x_{i+1}}\phi_i(x)\int_{\Omega}\frac{\phi_0(y)+\phi_N(y)}{|y-x|^\alpha}dydx>0,~i=2,3,\ldots N-2,
\end{split}
\end{equation*}
and
\begin{equation*}
\begin{split}
\sum_{j=1}^{N-1}a_{i,j}
&=C_\alpha\int_{x_{i-1}}^{x_{i+1}}\phi_i(x)\int_{\Omega}\frac{-\phi_0(x)+\phi_0(y)+\phi_N(y)}{|y-x|^\alpha}dydx\\
%&>C_\alpha\int_{x_{i-1}}^{x_{i+1}}\phi_i(x)\int_{\Omega}\frac{-\phi_0(x)+\phi_0(y)}{|y-x|^\alpha}dydx\\
&>C_\alpha\left[\int_{x_{i-1}}^{x_{i}}\phi_i(x)\int_{\Omega}\frac{-\phi_0(x)+\phi_0(y)}{|y-x|^\alpha}dydx+\int_{x_{i}}^{x_{i+1}}\phi_i(x)\int_{\Omega}\frac{\phi_0(y)}{|y-x|^\alpha}dydx\right]\\
&>C_\alpha\int_{x_{i-1}}^{x_{i}}\phi_i(x)\int_{\Omega}\frac{-\phi_0(x)+\phi_0(y)}{|y-x|^\alpha}dydx=\frac{C_\alpha h^{1-\alpha}}{(2-\alpha)(3-\alpha)}>0, ~~i=1.
\end{split}
\end{equation*}
Similarly, we have
$\sum_{j=1}^{N-1}a_{i,j}>0, ~~i=N.$
Since $\sum_{j=1}^{N-1}b_{i,j}>0$ and $b_{i,j} < 0,~j\neq i$, it yield $b_{i,i}>\sum\limits_{j\neq i}|b_{i,j}|>0$. The proof is completed.
\end{proof}

\begin{lemma}\label{lemma3.4}
Let $A_h$ be defined by (\ref{2.8}) with $1<\alpha<2$ and  $D_h$ be the diagonal of $A_h$. Then
$$1\leq \lambda_{\max}(D_h^{-1}A_h)<2.$$
\end{lemma}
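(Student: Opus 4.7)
The plan is to work directly with the matrix $B_h$, since $A_h = c\,B_h$ with $c = \kappa_\alpha/(h^{\alpha-1}\Gamma(4-\alpha)) > 0$, so $D_h^{-1}A_h = D_{B_h}^{-1}B_h$ where $D_{B_h}$ denotes the diagonal of $B_h$, and these two matrices share the same spectrum. All structural information I need about $B_h$ (sign pattern of off-diagonals, positivity and strict row diagonal dominance) has already been established in Lemma \ref{lemma3.3}, so the proof reduces to combining those facts with two standard tools: Gershgorin for the upper bound and a trace identity for the lower bound.

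For the upper bound, I would first reduce $\lambda_{\max}(D_{B_h}^{-1}B_h) < 2$ to positive definiteness of the symmetric matrix $M := 2D_{B_h} - B_h$. The chain of equivalences is standard: $D_{B_h}^{-1}B_h$ is similar to $D_{B_h}^{-1/2}B_h D_{B_h}^{-1/2}$, so $\lambda_{\max}(D_{B_h}^{-1}B_h) < 2$ iff $2I - D_{B_h}^{-1/2}B_h D_{B_h}^{-1/2}$ is positive definite, and after congruence by $D_{B_h}^{1/2}$ this is positive definiteness of $M$. The diagonal entries of $M$ are $b_{i,i}$ and the off-diagonal entries are $-b_{i,j}$, both positive by Lemma \ref{lemma3.3}(1). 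Applying Gershgorin's theorem to the real symmetric matrix $M$, every eigenvalue $\lambda$ of $M$ satisfies
\begin{equation*}
\lambda \;\geq\; \min_{1 \leq i \leq N-1}\!\left( b_{i,i} - \sum_{j\neq i}|b_{i,j}|\right) \;>\; 0,
\end{equation*}
where the strict positivity of the right-hand side is exactly the strict row diagonal dominance proved in Lemma \ref{lemma3.3}(2). Hence $M$ is positive definite and $\lambda_{\max}(D_h^{-1}A_h) < 2$.

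For the lower bound, I would use a one-line trace argument. Since $(D_h^{-1}A_h)_{ii} = (A_h)_{ii}/(A_h)_{ii} = 1$ for every $i$, we have $\mathrm{trace}(D_h^{-1}A_h) = N - 1$. The matrix $D_h^{-1}A_h$ is similar to the symmetric positive definite matrix $D_h^{-1/2}A_h D_h^{-1/2}$ (since $A_h$ is SPD and $D_h$ has positive diagonal by Lemma \ref{lemma3.3}(2)), so it has $N-1$ real positive eigenvalues whose sum equals $N-1$. The maximum of these eigenvalues is therefore at least their arithmetic mean, which is $1$, giving $\lambda_{\max}(D_h^{-1}A_h) \geq 1$. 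I do not anticipate a real obstacle here: Lemma \ref{lemma3.3} already absorbed the genuinely analytical work, and what remains is only the Gershgorin reduction for the upper bound and the trace observation for the lower bound, both of which are routine.
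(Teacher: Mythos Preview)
Your proof is correct and uses essentially the same ingredients as the paper. For the upper bound the paper applies Gershgorin directly to $D_h^{-1}A_h$ (whose $i$-th row has diagonal entry $1$ and off-diagonal absolute sum $<1$ by Lemma~\ref{lemma3.3}), whereas you reach the same conclusion after the equivalent detour through positive definiteness of $2D_{B_h}-B_h$; for the lower bound the paper evaluates the Rayleigh quotient of $D_h^{-1/2}A_hD_h^{-1/2}$ at a standard basis vector to get $\lambda_{\max}\ge c_{i,i}=1$, while your trace/mean argument is an equally short alternative.
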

\begin{proof}
Sine $D_h^{-1/2}\left(D_h^{-1/2}A_hD_h^{-1/2}\right)D_h^{1/2}=D_h^{-1}A_h$, it means that $D_h^{-1/2}A_hD_h^{-1/2}$ and $D_h^{-1}A_h$ are similar, i.e.,
$\lambda_{\max}(D_h^{-1}A_h)=\lambda_{\max}(D_h^{-1/2}A_hD_h^{-1/2})$.
Denote $C_h=D_h^{-1/2}A_hD_h^{-1/2}$ with $(C_h)_{i,j}=c_{i,j}$ and  $M_h=D_h^{-1}A_h$ with $(M_h)_{i,j}=m_{i,j}$. Using   Lemma \ref{lemma3.3} and  (\ref{2.8}), we obtain
$$ r_i:= \sum\limits_{j\neq i} |m_{i,j}| <m_{i,i}=1,~~i=1,2,\ldots N-1.$$

From the Gerschgorin circle theorem \cite[p.\,388]{Horn:13}, the eigenvalues of  $M_h$ are in the disks centered at $m_{i,i}$ with radius
$r_i$, i.e.,  the eigenvalues $\lambda$ of the matrix  $M_h$ satisfy
$$  |\lambda -m_{i,i} | \leq r_i, $$
which yields
$\lambda_{\max}(M_h)=\lambda_{\max}(D_h^{-1}A_h) \leq m_{i,i}+r_i< 2m_{i,i}=2. $

On the other hand, using the  Rayleigh theorem  \cite[p.\,235]{Horn:13}, i.e.,
$$\lambda_{\max}(C_h)=\max_{x\neq 0}\frac{x^TC_hx}{x^Tx}\quad\forall x \in  \mathcal{R}^{n_q},$$
if we take $x=[0,\ldots,0,1,0,\ldots,0]^T$, it means that
$$\lambda_{\max}(C_h)\geq \frac{x^TC_hx}{x^Tx}=c_{i,i}=1.$$
It yields
$$1\leq \lambda_{\max}(D_h^{-1}A_h)<2.$$
The proof is completed.
\end{proof}

\begin{theorem}\label{theorema3.5}
Let $A_q:=A_h$ be defined by (\ref{2.8}) with $1<\alpha<2$.
Then $K_{q}$ satisfies (\ref{3.1}) and the convergence factor of the TGM satisfies
\begin{equation*}
||K_{q} T^q||_{A_q} \leq
\sqrt{1-2\eta/5 }<1,\\
\end{equation*}
where $\eta\leq\omega(2-\omega\eta_0)$ with $0 < \omega <2/\eta_0$, $ \eta_0<2$.
\end{theorem}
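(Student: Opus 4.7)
The plan is to apply Lemma~\ref{lemma3.2} with the damped Jacobi smoother, matching the claimed factor $\sqrt{1-2\eta/5}$ by showing that the approximation property (\ref{3.2}) holds with $\kappa=5/2$.

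For the smoothing side, Lemma~\ref{lemma3.4} gives $\eta_0:=\lambda_{\max}(D_q^{-1}A_q)\in[1,2)$, so any fixed $\omega\in(0,2/\eta_0)$ produces a positive $\eta=\omega(2-\omega\eta_0)$, and Lemma~\ref{lemma3.1} immediately delivers (\ref{3.1}) on the finest level. This step uses no structural information about $A_q$ beyond its diagonal.

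The approximation side is reduced to an entrywise inequality as follows. I would pick the trial coarse vector by pointwise injection at even indices, $\nu_i^{q-1}:=\nu_{2i}$. With the convention $\nu_0=\nu_N=0$, the residual $e:=\nu-I_{q-1}^{q}\nu^{q-1}$ vanishes at every even index and at odd $j$ equals $e_j=\nu_j-\tfrac12(\nu_{j-1}+\nu_{j+1})$. Writing this as $(a+b)/2$ with $a=\nu_j-\nu_{j-1}$, $b=\nu_j-\nu_{j+1}$, and using $\bigl((a+b)/2\bigr)^2\le(a^2+b^2)/2$, one obtains
\[
\|e\|_{D_q}^2\le\tfrac12\sum_{j\text{ odd}}d_j\bigl[(\nu_j-\nu_{j-1})^2+(\nu_{j+1}-\nu_j)^2\bigr]=\tfrac12\sum_{k=0}^{N-1}W_k(\nu_{k+1}-\nu_k)^2,
\]
where $W_k:=d_{k^\star}$ and $k^\star$ is the odd element of $\{k,k+1\}$; each consecutive-pair square appears exactly once. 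On the energy side, the sign pattern from Lemma~\ref{lemma3.3} (all off-diagonal entries negative, row sums $S_i:=\sum_j a_{ij}>0$) legitimises the identity
\[
\nu^{T}A_q\nu=\sum_i S_i\nu_i^2+\sum_{i<j}|a_{ij}|(\nu_i-\nu_j)^2,
\]
whence the interior lower bound $\nu^{T}A_q\nu\ge\sum_{k=1}^{N-2}|a_{k,k+1}|(\nu_{k+1}-\nu_k)^2$ and the boundary lower bound $\nu^{T}A_q\nu\ge S_1\nu_1^2+S_{N-1}\nu_{N-1}^2$ both follow. Combining these with the previous display reduces (\ref{3.2}) with $\kappa=5/2$ to the entrywise ratio bounds $W_k\le 5|a_{k,k+1}|$ for interior $k$, and $d_1\le 5S_1$, $d_{N-1}\le 5S_{N-1}$ at the endpoints, uniform in $N$ and $\alpha\in(1,2)$.

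The main obstacle is proving these ratio bounds from the explicit formulas (\ref{2.9}). In the translation-invariant Toeplitz portion the interior ratio reduces to $c_0/|c_1|$ with $c_0=8-2^{4-\alpha}$ and $|c_1|=7+3^{3-\alpha}-2^{5-\alpha}$; both quantities vanish at $\alpha=1$, but their ratio stays bounded on $(1,2)$ by a direct expansion (the ratio equals $2$ at $\alpha=2$). The negative boundary corrections $\widetilde b_{i,i},\widetilde b_{i,i+1}$ subtract simultaneously from $d_j$ and from $|a_{j,j+1}|$, so a position-dependent analysis in $i$ is required to verify that the worst-case ratio never exceeds $5$; the endpoint bound $d_1\le 5S_1$ is obtained by combining the explicit upper bound on $d_1=a_{1,1}$ from (\ref{2.9}) with the integral lower bound $S_1\ge C_\alpha h^{1-\alpha}/[(2-\alpha)(3-\alpha)]$ established inside the proof of Lemma~\ref{lemma3.3}(IV). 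Once the ratio estimates are in hand, (\ref{3.2}) holds with $\kappa=5/2$ and Lemma~\ref{lemma3.2} closes the proof.
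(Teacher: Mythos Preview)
Your overall framework matches the paper's: verify (\ref{3.1}) via Lemmas~\ref{lemma3.1} and~\ref{lemma3.4}, then establish (\ref{3.2}) with $\kappa=5/2$ using the even-index injection $\nu^{q-1}_i=\nu_{2i}$. The divergence is in how you bound the two sides of the approximation inequality.

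The paper does not carry position-dependent weights. Since Lemma~\ref{lemma3.3} gives $\widetilde b_{i,i}<0$, every diagonal entry satisfies $a_{i,i}\le a_0:=\frac{\kappa_\alpha}{h^{\alpha-1}\Gamma(4-\alpha)}\,c_0$, so the upper bound becomes $\|e\|_{D_q}^2\le a_0\sum_i(\nu_i^2-\nu_i\nu_{i+1})$ with a single constant. For the energy the paper writes $A_q=-a_1 L_{n_q}+A_{\mathrm{rest}}$, where $a_1=\frac{\kappa_\alpha}{h^{\alpha-1}\Gamma(4-\alpha)}\,c_1<0$ and $A_{\mathrm{rest}}$ is symmetric, diagonally dominant with nonpositive off-diagonals (again by Lemma~\ref{lemma3.3}), hence positive semidefinite; dropping it gives $\|\nu\|_{A_q}^2\ge -2a_1\sum_i(\nu_i^2-\nu_i\nu_{i+1})$. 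Both sides are the \emph{same} Dirichlet Laplacian form, so $\kappa=-a_0/(2a_1)=c_0/(2|c_1|)$ is a function of $\alpha$ alone, shown to lie in $(1,5/2)$. No entrywise or boundary analysis is required.

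Two remarks on your route. First, the interior ``position-dependent analysis'' is unnecessary: since $\widetilde b_{i,i+1}<0$ has the same sign as $c_1$, one has $|a_{k,k+1}|=|c_1|+|\widetilde b_{k,k+1}|\ge|c_1|$; the tridiagonal correction therefore \emph{decreases} the numerator and \emph{increases} the denominator of $d_{k^\star}/|a_{k,k+1}|$, and the ratio is bounded by $c_0/|c_1|$ uniformly. Second, your boundary plan has a real gap: the bounds you cite reduce (after simplification) to $d_1<(\text{scaling})\,c_0$ and $S_1>(\text{scaling})\,\alpha(\alpha-1)$, giving $d_1/S_1<c_0/[\alpha(\alpha-1)]$, whose limit as $\alpha\downarrow 1$ is $8\ln 2\approx 5.55>5$; so $d_1\le 5S_1$ is not delivered by those estimates. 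The paper's Laplacian decomposition sidesteps this entirely because the Dirichlet form $\nu^{T}L_{n_q}\nu$ already absorbs the endpoint squares $\nu_1^2$ and $\nu_{N-1}^2$.
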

\begin{proof}
From Lemma \ref{lemma3.4}, we obtain  $\lambda_{\max}(D_q^{-1}A_q)\leq  \eta_0<2$.  Taking  $0 < \omega  < 2/\eta_0$, $\eta\leq\omega(2-\omega\eta_0)$
and using  Lemma \ref{lemma3.1}, we conclude that $K_J$ satisfies (\ref{3.1}).
Next we prove that (\ref{3.2}) holds, i.e., we need to find a closed form of the constant $\kappa$ for $A_q$ applied to Lemma \ref{lemma3.2}.
Let  $\nu_0=\nu_{n_q+1}=0$  and
$$\nu^q=(\nu_1,\nu_2,\ldots,\nu_{n_q})^{\rm T} \in\mathcal{R}^{n_q}, ~~ \nu^{q-1}=(\nu_2,\nu_4,\ldots,\nu_{n_q-1})^{\rm T} \in \mathcal{R}^{n_{q-1}}$$
with $n_q=2^q-1$.
From \cite{CWCD:14,Pang:12}, we have
\begin{equation*}
\begin{split}
&||\nu^q-I_{q-1}^q\nu^{q-1}||^2= \sum_{i=0}^{n_{q-1}}\left(\nu_{2i+1} -\frac{\nu_{2i}+\nu_{2i+2}}{2}\right)^2.
\end{split}
\end{equation*}
Using  (\ref{2.8}),  (\ref{2.9})  and Lemma   \ref{lemma3.3}, there exists
\begin{equation*}
\begin{split}
||\nu^q-I_{q-1}^q\nu^{q-1}||_{D_q}^2
&= \sum_{i=0}^{n_{q-1}}a_{2i+1,2i+1}\left(\nu_{2i+1} -\frac{\nu_{2i}+\nu_{2i+2}}{2}\right)^2\\
&\leq a_0 \sum_{i=0}^{n_{q-1}}\left(\nu_{2i+1} -\frac{\nu_{2i}+\nu_{2i+2}}{2}\right)^2
\leq a_0 \sum_{i=1}^{n_{q}}\left(\nu^2_i -\nu_i\nu_{i+1}\right)^2
\end{split}
\end{equation*}
with $a_0=\frac{\kappa_\alpha}{h^{\alpha-1}\Gamma(4-\alpha)} \left(8-2^{4-\alpha}   \right).$

Let the  symmetric positive definite  matrix $L_{n_q}={\rm tridiag}(-1,2,-1)$  be the $n_q\times n_q$ one dimensional discrete Laplacian.
From (\ref{2.8}) and (\ref{2.9}), we have $A_q=-a_1L_{n_q}+A_{\rm rest}$ with $a_1=\frac{\kappa_\alpha}{h^{\alpha-1}\Gamma(4-\alpha)} \left(-7-3^{3-\alpha} +2^{5-\alpha}\right)<0.$
Using Lemma \ref{lemma3.3}, it yields $A_{\rm rest}$ is also the symmetric positive definite  matrix with diagonally dominant.
Hence
\begin{equation*}
 ||\nu^q||_{A_q}^2= (A_q\nu^q,\nu^q) \geq (-a_1L_{n_q}\nu^q,\nu^q)=-2a_1 \sum_{i=1}^{n_{q}}\left(\nu^2_i -\nu_i\nu_{i+1}\right)^2\quad  \forall \nu^q \in \mathcal{R}^{n_q}.
\end{equation*}
 According to the above equations, we find
\begin{equation*}
\begin{split}
||\nu^q-I_{q-1}^q\nu^{q-1}||_{D_q}^2
\leq a_0 \sum_{i=1}^{n_{q}}\left(\nu^2_i -\nu_i\nu_{i+1}\right)^2\leq \kappa||\nu^q||_{A_q}^2
\end{split}
\end{equation*}
with $\kappa= -\frac{a_0}{2a_1} \in (1,\frac{5}{2}) $. The proof is completed.
\end{proof}

\section{Convergence of TGM and full MGM  for  (\ref{2.10}): a non-diagonally dominant system}\label{MGM-lapl}

Although there are still questions regarding the best ways to define the coarsening and interpolation operators when the stiffness matrix is far from being weakly diagonally dominant  \cite{Stuben:01}, here the simple (traditional) restriction operator and prolongation operator are employed for such  algebraic systems. A reason of the latter choice can be found in Subsection \ref{ssec:3}, where it is shown that the ill-conditioning of the involved coefficient  matrix is due to a vector in low frequency (the vector of all ones). We recall that the standard discrete Laplacian is ill-conditioned only in low frequencies, since its spectral symbol $f(\theta)=2-2\cos(\theta)$ has a unique zero at  $\theta=0$ (see \cite{Arico:04}) and therefore it is no surprise that the same
multigrid ingredients are effective also in our context.

In the following, we  extend the convergence results of the TGM to the full MGM.

\begin{lemma}[\cite{CDS:17}] \label{lemma4.1}
Let $A^{(1)}=\{a_{i,j}^{(1)}\}_{i,j=1}^{\infty}$ with $a_{i,j}^{(1)}=a_{|i-j|}^{(1)}$ be a  symmetric  Toeplitz  matrix
and  $A^{(k)}=L_h^{H}A^{(k-1)}L_{H}^{h}$ with $L_h^{H}=4I_k^{k-1}$ and $L_H^{h}=(L_h^{H})^T$. Then
$A^{(k)}$ can be computed by
\begin{equation*}
\begin{split}
a_0^{(k)}
=&(4C_k+2^{k-1})a_0^{(1)}+\sum_{m=1}^{2\cdot2^{k-1}-1}{_0}C_m^ka_m^{(1)},\\
a_1^{(k)}
=&C_ka_0^{(1)}+\sum_{m=1}^{3\cdot2^{k-1}-1}{_1}C_m^k a_m^{(1)},\\
a_j^{(k)}
=&\sum_{m=(j-2)2^{k-1}}^{(j+2)2^{k-1}-1} {_j}C_m^k a_m^{(1)}, \quad \forall j\geq 2, \quad \forall k\geq 2,
    \end{split}
\end{equation*}
with $C_k=2^{k-2}\cdot\frac{2^{2k-2}-1}{3}$. Furthermore
\begin{equation*}
{_0}C_m^k=\left\{ \begin{split}
&8C_k-(m^2-1)(2^k-m), ~~\quad~{\rm for}~~m=1: 2^{k-1},\\
&\frac{1}{3}(2^{k}-m-1)(2^{k}-m)(2^{k}-m+1),
~~{\rm for}~~m=2^{k-1}:2\cdot2^{k-1}-1,
 \end{split}
 \right.
\end{equation*}
${_1}C_m^k=$
\begin{equation*}
\left\{ \begin{split}
&2C_k+m^2\cdot2^{k-1}-\frac{2}{3}(m-1)m(m+1), ~~\quad~{\rm for}~~m=1: 2^{k-1},\\
&2C_k+(2^k-m)^2\cdot2^{k-1}-\frac{2}{3}(2^k-m-1)(2^k-m)(2^k-m+1)\\
& -\frac{1}{6}(m-2^{k-1}-1)(m-2^{k-1})(m-2^{k-1}+1),
~~\quad~{\rm for}~~~~m=2^{k-1}: 2\cdot2^{k-1},\\
&\frac{1}{6}(3\cdot2^{k-1}\!-\!m\!-\!1)(3\cdot2^{k-1}\!-\!m)(3\cdot2^{k-1}\!-\!m+1),
~~{\rm for}~~m=2\cdot2^{k-1}:3\cdot2^{k-1}-1,
 \end{split}
 \right.
\end{equation*}
and for $j\geq 2$,
\begin{equation*}
{_j}C_m^k=
\left\{ \begin{split}
&\varphi_1, ~~\quad~{\rm for}~~m=(j-2)2^{k-1}:(j-1)2^{k-1},\\
&\varphi_2,
~~\quad~{\rm for}~~ m=(j-1)2^{k-1}: j2^{k-1},\\
&\varphi_3,
~~\quad~{\rm for}~~ m=j2^{k-1}: (j+1)2^{k-1},
\\
&\varphi_4,
~~\quad~{\rm for}~~ m=(j+1)2^{k-1}:(j+2)2^{k-1}-1,
 \end{split}
 \right.
\end{equation*}
where
\begin{equation*}
\begin{split}
\varphi_1=\frac{1}{6}(m-(j-2)2^{k-1}-1)(m-(j-2)2^{k-1})(m-(j-2)2^{k-1}+1),
 \end{split}
\end{equation*}
\begin{equation*}
\begin{split}
\varphi_2=&2C_k+(m-(j-1)2^{k-1})^2\cdot2^{k-1}\\
& -\frac{1}{6}(j2^{k-1}-m-1)(j2^{k-1}-m)(j2^{k-1}-m+1)\\
&-\frac{2}{3}(m-(j-1)2^{k-1}-1)(m-(j-1)2^{k-1})(m-(j-1)2^{k-1}+1),\\
 \end{split}
\end{equation*}
\begin{equation*}
\begin{split}
\varphi_3=&2C_k+((j+1)2^{k-1}-m)^2\cdot2^{k-1} \\
&-\frac{1}{6}(m-j2^{k-1}-1)(m-j2^{k-1})(m-j2^{k-1}+1)\\
&-\frac{2}{3}((j+1)2^{k-1}-m-1)((j+1)2^{k-1}-m)((j+1)2^{k-1}-m+1),\\
 \end{split}
\end{equation*}
\begin{equation*}
\begin{split}
\varphi_4=&\frac{1}{6}((j+2)2^{k-1}-m-1)((j+2)2^{k-1}-m)((j+2)2^{k-1}-m+1).
 \end{split}
\end{equation*}
\end{lemma}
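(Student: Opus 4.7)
\medskip

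\noindent\textbf{Proof proposal for Lemma 4.1.}

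The plan is to argue by induction on $k$, using the fact that the composition $A^{(k)} = L_h^H A^{(k-1)} L_H^h$ preserves the symmetric Toeplitz structure (in the bi-infinite setting, one can check directly that $L_h^H$ and $L_H^h$ commute with the shift operator up to a factor of two, so the product of a symmetric Toeplitz matrix with such operators on the left and right yields a symmetric Toeplitz matrix whose entries are linear combinations of the original ones). The base case $k=1$ is tautological, and the case $k=2$ reduces to the identities
\begin{equation*}
a_0^{(2)} = 6\,a_0^{(1)} + 8\,a_1^{(1)} + 2\,a_2^{(1)} + \text{(boundary corrections from } L_h^H \text{)},
\end{equation*}
and analogous expressions for $a_1^{(2)}$ and $a_j^{(2)}$ with $j\ge 2$, which match the claimed formulas when specialized to $k=2$ (here $C_2 = 1$, and the piecewise support of ${_j}C_m^2$ has length $4$). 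This direct check anchors the induction and also reveals the ``four-piece'' structure that persists at higher levels.

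For the inductive step, I would exploit the fact that $L_h^H$ has the stencil $[1,2,1]$ (since $L_h^H = 4I_k^{k-1}$ undoes the $1/4$ normalization in the definition of $I_k^{k-1}$), so applying $L_h^H$ on the left and $L_H^h$ on the right convolves each matrix entry with the tensor-product stencil $[1,2,1]\otimes[1,2,1]$ while simultaneously down-sampling by a factor of two. Iterating this observation, the entries of $A^{(k)}$ equal entries of $A^{(1)}$ convolved with the $k$-fold self-convolution (scaled by the successive down-samplings) of the stencil $[1,2,1]$. The $k$-fold convolution of $[1,2,1]$ with itself is a binomial-type sequence supported on an interval of length $2^{k}+1$ (or thereabouts after accounting for the down-sampling), whose entries are cubic polynomials in the position variable with piecewise definitions — the four regimes $\varphi_1,\varphi_2,\varphi_3,\varphi_4$ correspond precisely to the four ``quadrants'' obtained from splitting the convolution support at its endpoints and the two interior breakpoints that arise from the down-sampling alignment. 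The scalar $C_k = 2^{k-2}(2^{2k-2}-1)/3$ arises as the value of the central lobe of the $k$-fold convolution, and can be verified by induction from the recursion $C_k = 4\,C_{k-1} + \text{(lower order)}$ using the identity $\sum_{m=1}^{2^{k-1}}m^2 = 2^{k-1}(2^{k-1}+1)(2^{k}+1)/6$.

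The diagonal and near-diagonal entries $a_0^{(k)}$ and $a_1^{(k)}$ require separate treatment because folding from the negative indices onto the positive side (via symmetry $a_{-m}^{(1)} = a_m^{(1)}$) adds extra contributions to the weights of small-index Toeplitz coefficients. This is precisely the source of the distinction between ${_0}C_m^k$, ${_1}C_m^k$, and ${_j}C_m^k$ for $j\ge 2$: in the first two cases the convolution window overlaps the diagonal and its reflection, producing the additional $4C_k$, $2C_k$, and $2^{k-1}$ terms. The piecewise formulas are then derived by explicitly enumerating, for each integer $m$ in the relevant range, how many pairs $(p,q)$ of multi-indices in the $k$-fold tensor stencil map to position $m$ after down-sampling; this count is a convolution of triangular functions and evaluates to the cubic polynomials displayed in $\varphi_1,\ldots,\varphi_4$.

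The main obstacle will be bookkeeping: keeping track of the four piecewise regimes and of the endpoint corrections for $j=0$ and $j=1$ simultaneously as $k$ advances. To control this, I would maintain the induction hypothesis in the form of a generating function
\begin{equation*}
\sum_{j\in\mathbb{Z}} a_j^{(k)} z^j \;=\; p_k(z)\,p_k(z^{-1})\sum_{m\in\mathbb{Z}} a_m^{(1)} z^{2^{k-1} m},
\end{equation*}
where $p_k$ is an explicit polynomial obtained by iterating the symbol $1+2z+z^2$ under the two-scale recurrence $p_k(z) = p_{k-1}(z^2)(1+2z+z^2)$. Expanding $p_k(z)p_k(z^{-1})$ and collecting coefficients yields the explicit cubic polynomials $\varphi_1,\ldots,\varphi_4$ without having to manipulate raw index ranges, thereby converting the combinatorial enumeration into a cleaner algebraic identity that closes the induction.
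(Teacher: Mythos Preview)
The paper does not contain a proof of Lemma~4.1: the lemma is stated with the citation \cite{CDS:17} and no argument is given here, so there is no ``paper's own proof'' to compare your proposal against. Your outline---induction on $k$, using that the transfer operators carry the stencil $[1,2,1]$ so that one Galerkin step amounts to a $[1,2,1]\otimes[1,2,1]$ convolution followed by dyadic down-sampling, and then tracking the resulting piecewise-cubic weights---is the natural route and is almost certainly what the cited source does as well.

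One caution: the generating-function identity you write at the end,
\[
\sum_{j\in\mathbb{Z}} a_j^{(k)} z^j \;=\; p_k(z)\,p_k(z^{-1})\sum_{m\in\mathbb{Z}} a_m^{(1)} z^{2^{k-1} m},
\]
is not correct as stated, because Galerkin coarsening is not a pure convolution on the fine grid---it is a convolution \emph{followed by down-sampling}, so on the symbol side one step sends $f(\theta)$ to (up to normalization) $|p(e^{i\theta/2})|^2 f(\theta/2)+|p(e^{i(\theta/2+\pi)})|^2 f(\theta/2+\pi)$, not to a single product. If you want a clean algebraic device, work instead with the fine-grid generating function of the \emph{kernel} $L_H^h L_h^H$ iterated $k-1$ times (which \emph{is} a genuine convolution and does have symbol $\prod_{\ell=0}^{k-2}(1+z^{2^\ell})^2(1+z^{-2^\ell})^2$ up to shifts), and only at the very end read off the entries at the coarse-grid positions. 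That repairs the bookkeeping and delivers the four cubic pieces $\varphi_1,\ldots,\varphi_4$ as coefficients of a product of B-spline–type polynomials, exactly as you anticipate.
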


\begin{lemma}\label{lemma4.2}
Let $E^{(1)}$ be a  symmetric  tridiagonal   matrix
and  $E^{(k)}=L_h^{H}E^{(k-1)}L_{H}^{h}$ with $L_h^{H}=4I_k^{k-1}$ and $L_H^{h}=(L_h^{H})^T$. Then
$E^{(k)}$ is a   symmetric  tridiagonal   matrix.
\end{lemma}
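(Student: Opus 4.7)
The plan is to proceed by induction on $k$, with the base case $k=1$ being trivial by hypothesis. The substance of the argument lies in showing that the sparsity pattern of a symmetric tridiagonal matrix is preserved under the Galerkin-type sandwich operation $E \mapsto L_h^H E L_H^h$, so it suffices to establish the inductive step $E^{(k-1)} \text{ tridiagonal} \Rightarrow E^{(k)} \text{ tridiagonal}$.

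The key observation is structural: the rescaled restriction $L_h^H = 4 I_k^{k-1}$ has only three nonzero entries per row (values $1,2,1$ at columns $2i-1, 2i, 2i+1$), and $L_H^h = (L_h^H)^T$ has the analogous three-nonzero pattern per column. Writing out
\[
E^{(k)}_{i,j} = \sum_{p,q} (L_h^H)_{i,p}\, E^{(k-1)}_{p,q}\, (L_h^H)_{j,q},
\]
the $p$-sum is restricted to $p \in \{2i-1,2i,2i+1\}$ and the $q$-sum to $q \in \{2j-1,2j,2j+1\}$. Since $E^{(k-1)}$ is tridiagonal by the inductive hypothesis, a further restriction $|p-q|\le 1$ is needed for nonvanishing contributions. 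If $j \ge i+2$, then $q \ge 2j-1 \ge 2i+3$ and $p \le 2i+1$, forcing $q-p \ge 2$ and hence $E^{(k)}_{i,j}=0$. The symmetric case $i\ge j+2$ is identical, so $E^{(k)}$ is tridiagonal.

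Symmetry is immediate from the symmetry of $E^{(k-1)}$ and the fact that $L_H^h = (L_h^H)^T$: taking transposes,
\[
(E^{(k)})^T = (L_h^H E^{(k-1)} L_H^h)^T = (L_H^h)^T (E^{(k-1)})^T (L_h^H)^T = L_h^H E^{(k-1)} L_H^h = E^{(k)},
\]
closing the induction.

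There is no real obstacle here — the argument is a pure bookkeeping check on the sparsity pattern. The only mild care required is the indexing at the boundary rows (when $i=1$ or $i=n_{k-1}$, some of the stencil entries fall outside the matrix), but this only removes terms from the sums and cannot create new nonzero off-diagonals, so the conclusion is unaffected. In particular, one does not need any of the explicit formulae from Lemma \ref{lemma4.1}; the tridiagonal preservation depends only on the three-point support of each row of $L_h^H$.
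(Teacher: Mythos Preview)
Your proof is correct and follows the same inductive scheme as the paper. The paper's proof verifies the inductive step by writing out explicit formulas for the diagonal and off-diagonal entries of $E^{(2)}$ in terms of those of $E^{(1)}$ and then invokes induction, whereas you carry out the step abstractly via the three-point support of each row of $L_h^H$; this is a presentational difference, not a different route.
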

\begin{proof}
Let $q$ be a total number of levels with  $N=2^q$ and
\begin{equation*}
\begin{split}
E^{(1)}= \left [ \begin{matrix}
a_1      &    b_1           &            &                    \\
b_1     &    a_2            &  b_2        &           &         \\
       &   \ddots        &  \ddots    &   \ddots  &          \\
       &                 &  b_{N-3}       &       a_{N-2}   &      b_{N-2}    \\
       &                 &            &       b_{N-2}   &     a_{N-1}
\end{matrix}
\right ]_{(N-1)\times(N-1)}.
\end{split}
\end{equation*}
Then we have
\begin{equation*}
\begin{split}
E^{(2)}= \left [ \begin{matrix}
d_1      &    e_1           &            &                    \\
e_1     &    d_2            &  e_2        &           &         \\
       &   \ddots        &  \ddots    &   \ddots  &          \\
       &                 &  e_{N/2-3}       &       d_{N/2-2}   &      e_{N/2-2}    \\
       &                 &            &       e_{N/2-2}   &     d_{N/2-1}
\end{matrix}
\right ]_{(N/2-1)\times(N/2-1)}
\end{split}
\end{equation*}
with
\begin{equation*}
\begin{split}
&d_i=a_{2i-1}+4(b_{2i-1} + b_{2i}+a_{2i})+a_{2i+1},~~1\leq i\leq N/2-1,\\
&e_i=a_{2i+1}+2( b_{2i}+b_{2i+1}),~~1\leq i\leq N/2-2.
\end{split}
\end{equation*}
By mathematical induction, the proof is completed.
\end{proof}

\subsection{The operation count and storage requirement}\label{section:3.2}
We now discuss the computation count and the required storage for the MGM of the  nonlocal problems (\ref{2.1}).

From (\ref{2.9}), we know that the  matrix $A_h$ is a symmetric   Toeplitz-plus-tridiagonal  matrix.
Then, we only need to store the first column,  principal diagonal and trailing diagonal  elements  of $A_h$,
which have $\mathcal{O}(N)$ parameters, instead of the full matrix $A_h$ with $N^2$ entries.
From Lemmas \ref{lemma4.1} and \ref{lemma4.2}, we know that $\{A_k\}$ is still  a symmetric Toeplitz-plus-tridiagonal  matrix  with the  sizes $2^{k-q}\mathcal{O}(N)$ storage.
Adding these terms together, we find
\begin{equation*}
  \mbox{Storage}=\mathcal{O}(N) \cdot \left( 1+\frac{1}{2}+\frac{1}{2^2}+\ldots+\frac{1}{2^{q-1}} \right)=\mathcal{O}(N).
\end{equation*}

As for operation counts, the matrix-vector product associated with the matrix $A_h$ is a discrete convolution.
While the cost of a direct product is $O(N)$ for tridiagonal  matrix, the cost of using the FFT would lead to $O(N\log(N)$  for dense Toeplitz matrix \cite{Chan:07}.
Thus, the total per V-cycle MGM operation count is
\begin{equation*}
\mathcal{O}( N\log N) \cdot \left( 1+\frac{1}{2}+\frac{1}{2^2}+\ldots+\frac{1}{2^{q-1}} \right)=\mathcal{O}( N\log N).
\end{equation*}
Similarly, we can discuss the case of the  matrix $A_h$ in (\ref{2.11}).

\subsection{Convergence of TGM for (\ref{2.10}): a non-diagonally  dominant system}

We now start to prove the convergence of TGM for (\ref{2.10}).
\begin{lemma}\label{lemma4.3}
Let $A_h$ be defined by (\ref{2.11}) and  $D_h$ be the diagonal of $A_h$. Then
$$1\leq \lambda_{\max}(D_h^{-1}A_h)<3.$$
\end{lemma}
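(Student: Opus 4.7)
The plan is to mirror the proof of Lemma~\ref{lemma3.4}, with the Gerschgorin step adjusted for the fact that the matrix in (\ref{2.11}) is no longer diagonally dominant.

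First, I would note that $D_h^{-1}A_h$ is similar (via the symmetric conjugation $D_h^{-1/2}(\cdot)D_h^{1/2}$) to the symmetric matrix $C_h := D_h^{-1/2}A_hD_h^{-1/2}$, so $\lambda_{\max}(D_h^{-1}A_h) = \lambda_{\max}(C_h)$. The lower bound $\lambda_{\max}(C_h) \geq 1$ then follows immediately from the Rayleigh quotient with $x = e_i$ (a coordinate vector), since every diagonal entry of $C_h$ equals $1$ by the very construction of $D_h$.

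For the upper bound I would apply the Gerschgorin disc theorem to $M_h := D_h^{-1}A_h$. Because $D_h$ is the scalar matrix $h^2(2N/3-1)I_{N-1}$, every diagonal entry of $M_h$ equals $1$, and every off-diagonal entry of $M_h$ is the corresponding off-diagonal entry of $B_h$ divided by $2N/3-1$. A direct row count based on (\ref{2.12}) gives, for an interior row $i$ and $N$ large enough that $N/6 - 1 \geq 0$, an off-diagonal absolute row sum
\[
r_i = \frac{2(N/6 - 1) + (N - 4)}{2N/3 - 1} = \frac{4N/3 - 6}{2N/3 - 1},
\]
and the strict inequality $r_i < 2$ reduces to $-6 < -2$. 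A boundary row $i=1$ or $i=N-1$ gives $r_i = (7N/6 - 4)/(2N/3 - 1)$, which likewise satisfies $r_i < 2$ (equivalent to $-12 < N$). Gerschgorin therefore yields $\lambda_{\max}(M_h) \leq 1 + \max_i r_i < 3$.

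The main (mild) obstacle is precisely that $B_h$ fails the diagonal dominance property of Lemma~\ref{lemma3.3}, so the Gerschgorin radius cannot be bounded by $1$ as in Lemma~\ref{lemma3.4} but only by $2$, uniformly in $N$; this is why the upper estimate here becomes $3$ rather than $2$. The few small values $N \leq 5$ for which $N/6 - 1$ is negative can be treated by direct inspection (the absolute-value corrections only shrink the row sums further), so the uniform bound is preserved.
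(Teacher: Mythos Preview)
Your proposal is correct and follows essentially the same route as the paper: the paper also bounds the off-diagonal row sums by $\sum_{j\neq i}|a_{i,j}|<2a_{i,i}$ and then invokes Gerschgorin for the upper bound, and refers to the Rayleigh-quotient argument of Lemma~\ref{lemma3.4} for the lower bound. Your write-up is in fact slightly more explicit (you compute the interior and boundary row sums and address the small-$N$ sign of $N/6-1$), but there is no methodological difference.
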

\begin{proof}
 From  (\ref{2.11}), we obtain
$$ r_i:=\sum\limits_{j\neq i} |a_{i,j}| =h^2\sum\limits_{j\neq i} |b_{i,j}| < 2h^2 b_{i,i}=2a_{i,i}.$$
From the Gerschgorin circle theorem \cite[p.\,388]{Horn:13}, the eigenvalues of  $A_h$ are in the disks centered at $a_{i,i}$ with radius
$r_i$, i.e.,  the eigenvalues $\lambda$ of the matrix  $A_h$ satisfy
$$  |\lambda -a_{i,i} | \leq r_i, $$
which yields
$\lambda_{\max}(A_h) \leq a_{i,i}+r_i< 3a_{i,i}. $
By the same way as Lemma \ref{lemma3.4}, we have
$\lambda_{\max}(A_h)\geq a_{i,i}.$
The proof is completed.
\end{proof}

\begin{lemma}\label{lemma4.4}
Let $B=B_h$ be defined by  (\ref{2.11})  and   $L_{N-1}={\rm tridiag}(-1,2,-1)$  be the $(N-1)\times (N-1)$ one dimensional discrete Laplacian.
Then $$H:=B-\frac{N}{12}L_{N-1}$$ is a positive semi-definite matrix. In particular, $H$ is singular for $N$ even and it is positive definite for $N$ odd.
\end{lemma}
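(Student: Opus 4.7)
The plan is to read off the entries of $H$ explicitly and rewrite the quadratic form $x^{T}Hx$ in a form where a single application of Cauchy--Schwarz delivers positive semidefiniteness, with the equality case pinpointing the parity split.

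First, I would combine (\ref{2.12}) with $L_{N-1}=\mathrm{tridiag}(-1,2,-1)$ to obtain the explicit entries of $H=B_h-\frac{N}{12}L_{N-1}$: the diagonal equals $N/2-1$, the first sub- and superdiagonal equal $N/4-1$, and every other off-diagonal entry equals $-1$. The crucial observation is that the ``far'' entries are all equal, so their contribution to $x^{T}Hx$ can be repackaged as a single term in $(e^{T}x)^{2}$.

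Next, for $x\in\mathbb{R}^{N-1}$ extended by $x_{0}=x_{N}=0$, I would expand $x^{T}Hx$ into diagonal, tridiagonal, and far pieces, then use $(e^{T}x)^{2}=\sum_i x_i^{2}+2\sum_{i<j}x_i x_j$ to rewrite the far piece. After a short cancellation of the $-1$ shifts in $N/2-1$ and $N/4-1$, I expect to arrive at the clean identity
\[
x^{T}Hx \;=\; \frac{N}{2}\!\left(\sum_{i=1}^{N-1}x_i^{2}+\sum_{i=1}^{N-2}x_i x_{i+1}\right)-\bigl(e^{T}x\bigr)^{2}.
\]
Setting $y_i:=x_i+x_{i+1}$ for $i=0,\ldots,N-1$, the boundary conditions $x_0=x_N=0$ give $\sum_{i=0}^{N-1}y_i^{2}=2\bigl(\sum_i x_i^{2}+\sum_i x_i x_{i+1}\bigr)$ and $\sum_{i=0}^{N-1}y_i=2\,e^{T}x$, so that
\[
x^{T}Hx \;=\; \frac{N}{4}\sum_{i=0}^{N-1}y_i^{2}-\frac{1}{4}\!\left(\sum_{i=0}^{N-1}y_i\right)^{\!2}.
\]
The Cauchy--Schwarz inequality applied to the $N$ numbers $y_0,\ldots,y_{N-1}$ gives $\bigl(\sum_i y_i\bigr)^{2}\le N\sum_i y_i^{2}$, whence $x^{T}Hx\ge 0$, proving $H$ is positive semidefinite.

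Finally, the equality case drives the parity dichotomy. Cauchy--Schwarz equality forces $y_0=\cdots=y_{N-1}=:c$, i.e.\ the recursion $x_{i+1}=c-x_i$ together with $x_0=0$, which gives $x_i=c$ for $i$ odd and $x_i=0$ for $i$ even. The remaining constraint $x_N=0$ is automatic when $N$ is even, producing the explicit nonzero kernel vector $v=(1,0,1,0,\ldots,0,1)^{T}\in\mathbb{R}^{N-1}$ and showing $H$ is singular with a one-dimensional kernel; for $N$ odd, $x_N=c$ forces $c=0$, hence $x=0$, and $H$ is strictly positive definite. The only step that requires genuine care is the term-by-term cancellation leading to the clean identity above; once in hand, Cauchy--Schwarz handles both claims in one stroke.
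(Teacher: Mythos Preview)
Your proof is correct. The positive-semidefiniteness part is essentially identical to the paper's: both compute $H=\frac{N}{4}\,\mathrm{tridiag}(1,2,1)-ee^{T}$, rewrite the quadratic form as $\frac{N}{4}\sum_{i=0}^{N-1}(x_i+x_{i+1})^{2}-\bigl(\sum_i x_i\bigr)^{2}$, and apply Cauchy--Schwarz to the $N$ numbers $y_i=x_i+x_{i+1}$.

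Where you genuinely diverge is in the parity dichotomy. The paper does not analyse the equality case of Cauchy--Schwarz; instead it factors $H=\frac{N}{4}T_{N-1}(g)\bigl[I-\frac{4}{N}T_{N-1}^{-1}(g)ee^{T}\bigr]$ with $g(\theta)=2+2\cos\theta$, takes determinants to reduce the question to the sign of $1-\frac{4}{N}e^{T}T_{N-1}^{-1}(g)e$, uses the similarity $T_{N-1}(g)=DL_{N-1}D$ with $D=\mathrm{diag}((-1)^j)$, and finally plugs in the explicit entries of $L_{N-1}^{-1}$ to evaluate $e^{T}T_{N-1}^{-1}(g)e$ as $N/4$ for $N$ even and $(N^{2}-1)/(4N)$ for $N$ odd. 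Your route is considerably more elementary: the recursion $x_{i+1}=c-x_i$, $x_0=0$, immediately produces the alternating vector $(1,0,1,0,\ldots,0,1)^{T}$ as an explicit kernel element when $N$ is even and forces $x=0$ when $N$ is odd. The paper's approach has the side benefit of yielding the exact determinant (hence quantitative information, $\phi(N)=1/N^{2}$ for $N$ odd), while yours gives the kernel vector directly and avoids the machinery of the explicit Laplacian inverse.
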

\begin{proof}
The matrix H can be written as
\begin{equation*}
\begin{split}
H&= \frac{N}{4}\left [ \begin{matrix}
2      &    1            &            &                    \\
1      &    2            &  1         &           &         \\
       &   \ddots        &  \ddots    &   \ddots  &          \\
       &                 &  1         &       2   &      1    \\
       &                 &            &       1   &      2
 \end{matrix}
 \right ]_{(N-1)\times(N-1)}
-
\left [ \begin{matrix}
1      &    1       &    \cdots       &     1        \\
1      &    1       &    \cdots       &     1         \\
\vdots & \vdots     &  \vdots         &   \vdots       \\
1      &    1       &    \cdots       &     1
 \end{matrix}
 \right ]_{(N-1)\times(N-1)}
 \end{split}
\end{equation*}
or more compactly
\begin{equation}\label{compact}
H= \frac{N}{4}T_{N-1}(g(\theta)) - e e^T,\quad \quad g(\theta)=2+2\cos(\theta),
\end{equation}
with  $e^T=(1,\ldots,1)$ being the vector of all ones of size $N-1$.

We next prove $(Hx,x)\geq 0$ $\forall x=(x_1,x_2,\ldots,x_{N-1})^T$.
Using Cauchy-Schwarz inequality with $x_0=x_{N}=0$, we have
\begin{equation*}
\begin{split}
(Hx,x)
&=\frac{N}{4}\left( 2\sum_{i=1}^{N-1}x_i^2+ 2\sum_{i=1}^{N-2}x_ix_{i+1}  \right)-\left(\sum_{i=1}^{N-1}x_i\right)^2\\
&=\frac{1}{4}\sum_{i=0}^{N-1}1^2\cdot\sum_{i=0}^{N-1} \left( x_i+x_{i+1} \right)^2-\left(\sum_{i=1}^{N-1}x_i\right)^2\\
&\geq \frac{1}{4}\left(\sum_{i=0}^{N-1} (x_i+x_{i+1})\right)^2-\left(\sum_{i=1}^{N-1}x_i\right)^2=0.
 \end{split}
\end{equation*}
For proving the singularity of $H$ for $N$ even and its positive definiteness for $N$ odd, we proceed as in Subsection \ref{ssec:3}. First we observe that the eigenvalues of  $T_{N-1}(g(\theta))$ belong to the open set
$(0,4)$ since $0=\min g(\theta)$, $4=\max g(\theta)$, $g(\theta)=2+2\cos(\theta)$. Thus $T_{N-1}(g(\theta))$ is invertible and
\begin{equation}\label{compact-bis}
H= \frac{N}{4}T_{N-1}(g(\theta))\left[I_{N-1}-\frac{4}{N} T_{N-1}^{-1}(g(\theta))e e^T\right].
\end{equation}
Consequently, by the Binet theorem, we have
\begin{equation}\label{compact-ter}
{\rm det}(H)= {\rm det}(Y){\rm det}\left(I_{N-1}-\frac{4}{N} T_{N-1}^{-1}(g(\theta))e e^T\right)=
{\rm det}(Y)\left(1-\frac{4}{N} e^T T_{N-1}^{-1}(g(\theta))e \right),
\end{equation}
with $Y=\frac{N}{4}T_{N-1}(g(\theta))$.

Now there is a basic similarity relation between $T_{N-1}(g(\theta))$ and the discrete Laplacian
$L_{N-1}={\rm tridiag}(-1,2,-1)=T_{N-1}(2-2\cos(\theta))$ since
\[
T_{N-1}(g(\theta)) = D L_{N-1} D,\quad\quad D=D^{-1}={\rm diag}((-1)^j:\, 1\le j\le N-1).
\]
Consequently, by (\ref{compact-ter}) and by the positivity of ${\rm det}(Y)$, the sign of ${\rm det}(H)$ depends
on the quantity
\begin{equation}\label{compact-4}
\phi(N)=1-\frac{4x}{N}, \quad\quad x=e^T T_{N-1}^{-1}(g(\theta))e= e^T D  L_{N-1}^{-1} D e,
\end{equation}
that is
\[
{\rm det}(H)= {\rm det}(Y) \phi(N).
\]
The quantities in (\ref{compact-4}) have an explicit expression since the inverse of the discrete Laplacian is known and in particular we have $\left( L_{N-1}^{-1}\right)_{r,c}=t_r^{(c)}$, $\left(  T_{N-1}^{-1}(g(\theta))\right)_{r,c}=t_r^{(c)} (-1)^{r+c}$ with
\begin{align}
t_r^{(c)}&=\frac{(N-c)r}{N},\quad r=1,\ldots,  c,\label{eq:tc1}\\
t_r^{(c)}&=\frac{(N-r)c}{N}, \quad r=c+1\ldots,  N-1.\label{eq:tc2}
\end{align}
Finally, using the latter explicit values in (\ref{compact-4}), (\ref{eq:tc1}), (\ref{eq:tc2}), we obtain
\begin{align}
x=\begin{cases}
\frac{N^2}{4N}=\frac{N}{4},& N \text{ even,}\\
\frac{N^2-1}{4N},& N \text{ odd,}\\
\end{cases}
\end{align}
that is
\begin{align}
\phi(N)=1-\frac{4x}{N}=\begin{cases}
0,& N \text{ even,}\\
\frac{1}{N^2},& N \text{ odd,}\\
\end{cases}
\end{align}
and the proof is concluded.
\end{proof}

\begin{theorem}\label{theorema4.5}
Let $A_q:=A_h$ be defined by (\ref{2.11}).
Then $K_{q}$ satisfies (\ref{3.1}) and the convergence factor of the TGM satisfies
\begin{equation*}
||K_{q} T^q||_{A_q} \leq
\sqrt{1-\eta/4 }<1,\\
\end{equation*}
where $\eta\leq\omega(2-\omega\eta_0)$ and $0 < \omega <2/\eta_0$, $ \eta_0<3$.
\end{theorem}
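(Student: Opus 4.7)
The plan is to invoke Lemma \ref{lemma3.2}, which, once the smoothing property (\ref{3.1}) and the approximation property (\ref{3.2}) are both in place, delivers a TGM convergence factor of $\sqrt{1-\eta/\kappa}$. Since the target bound is $\sqrt{1-\eta/4}$, it suffices to verify (\ref{3.2}) with $\kappa \leq 4$. The smoothing property is immediate: Lemma \ref{lemma4.3} gives $\lambda_{\max}(D_q^{-1}A_q) < 3$, so choosing $\eta_0 = 3$ and any $\omega \in (0,2/3)$ with $\eta \leq \omega(2-\omega\eta_0)$, Lemma \ref{lemma3.1} yields (\ref{3.1}) at once.

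For the approximation property, I will bound the minimum in (\ref{3.2}) by the natural choice $\nu^{q-1} = (\nu_2,\nu_4,\ldots,\nu_{n_q-1})^{\rm T}$, together with the Dirichlet convention $\nu_0 = \nu_{n_q+1} = 0$. Because every diagonal entry of $A_q$ equals $a_0 := h^2(2N/3 - 1)$, this reduces the left-hand side of (\ref{3.2}) to
\[
a_0 \sum_{i=0}^{n_{q-1}} \left(\nu_{2i+1}-\tfrac{\nu_{2i}+\nu_{2i+2}}{2}\right)^2 = \tfrac{a_0}{4}\sum_{i=0}^{n_{q-1}} \bigl[(\nu_{2i+1}-\nu_{2i})-(\nu_{2i+2}-\nu_{2i+1})\bigr]^2.
\]
Applying the elementary inequality $(a-b)^2 \leq 2(a^2+b^2)$ and observing that as $i$ runs through $0,\dots, n_{q-1}$ every consecutive difference $(\nu_{j+1}-\nu_j)^2$ with $j=0,\dots, n_q$ is counted exactly once, the right-hand side is dominated by $\tfrac{a_0}{2}(L_{n_q}\nu^q,\nu^q)$, where $L_{n_q}$ denotes the one-dimensional discrete Dirichlet Laplacian.

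The decisive step is the use of Lemma \ref{lemma4.4}: it asserts that $B_h - \tfrac{N}{12}L_{n_q}$ is positive semidefinite, and rescaling by $h^2$ gives $A_q \succeq \tfrac{h^2 N}{12}L_{n_q}$ in the L\"owner ordering. Consequently $(L_{n_q}\nu^q,\nu^q) \leq \tfrac{12}{h^2 N}\|\nu^q\|_{A_q}^2$, and inserting this into the preceding bound together with $a_0 < 2h^2 N/3$ produces $\|\nu^q - I_{q-1}^q\nu^{q-1}\|_{D_q}^2 < 4\|\nu^q\|_{A_q}^2$, i.e.\ (\ref{3.2}) with $\kappa < 4$. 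Lemma \ref{lemma3.2} then closes the argument. The principal difficulty is that $A_q$ is \emph{not} weakly diagonally dominant, so the termwise comparison with a tridiagonal piece that drove Theorem \ref{theorema3.5} is unavailable here; the full weight of the proof therefore rests on Lemma \ref{lemma4.4}, which replaces the missing diagonal dominance by the global quadratic-form comparison $A_q \succeq \tfrac{h^2 N}{12}L_{n_q}$ required to dominate the interpolation error in $D_q$-norm by the $A_q$-energy.
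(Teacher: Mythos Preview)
Your proof is correct and follows essentially the same route as the paper: verify smoothing via Lemma~\ref{lemma4.3} and Lemma~\ref{lemma3.1}, choose $\nu^{q-1}$ as the even-index subvector, bound the $D_q$-norm interpolation error by $\tfrac{a_0}{2}(L_{n_q}\nu^q,\nu^q)$, invoke Lemma~\ref{lemma4.4} to obtain $A_q\succeq \tfrac{h^2N}{12}L_{n_q}$, and conclude $\kappa<4$ via Lemma~\ref{lemma3.2}. The only cosmetic differences are that the paper cites \cite{CWCD:14,Pang:12} for the bound you derive explicitly via $(a-b)^2\le 2(a^2+b^2)$, and that you fix $\eta_0=3$ whereas the theorem statement (and the paper's proof) takes any $\eta_0\in[\lambda_{\max}(D_q^{-1}A_q),3)$; neither affects the substance.
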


\begin{proof}
From Lemma \ref{lemma4.3}, we have  $\lambda_{\max}(D_q^{-1}A_q)\leq  \eta_0<3$.  Taking  $0 < \omega  < 2/\eta_0$, $\eta\leq\omega(2-\omega\eta_0)$
and using  Lemma \ref{lemma3.1}, we conclude that $K_J$ satisfies (\ref{3.1}).
Next we prove that (\ref{3.2}) holds, i.e., we need to find a closed form of the constant $\kappa$ for $A_q$ applied to Lemma \ref{lemma3.2}.
Let
$$\nu^q=(\nu_1,\nu_2,\ldots,\nu_{n_q})^{\rm T} \in\mathcal{R}^{n_q}, ~~ \nu^{q-1}=(\nu_2,\nu_4,\ldots,\nu_{n_q-1})^{\rm T} \in \mathcal{R}^{n_{q-1}},$$
and $\nu_0=\nu_{n_q+1}=0$ with $n_q=2^q-1$.
From \cite{CWCD:14,Pang:12} and (\ref{2.11}), we have
\begin{equation*}
\begin{split}
||\nu^q-I_{q-1}^q\nu^{q-1}||_{D_q}^2
\leq a_0 \sum_{i=1}^{n_{q}}\left(\nu^2_i -\nu_i\nu_{i+1}\right)^2=\frac{a_0}{2} (L_{n_q}\nu^q,\nu^q)
\end{split}
\end{equation*}
with $a_0=h^2 \left(\frac{2N}{3} -1 \right), N=n_q+1$ and  $L_{n_q}={\rm tridiag}(-1,2,-1)$.

Using Lemma \ref{lemma4.4} and (\ref{2.11}), we infer
\begin{equation*}
\begin{split}
 ||\nu^q||_{A_q}^2=(A_q\nu^q,\nu^q)= h^2(B_h\nu^q,\nu^q)\geq h^2\frac{N}{12}(L_{n_q}\nu^q,\nu^q).
\end{split}
\end{equation*}
 According to the above equations, we conclude
\begin{equation*}
\begin{split}
||\nu^q-I_{q-1}^q\nu^{q-1}||_{D_q}^2\leq \frac{a_0}{2} (L_{n_q}\nu^q,\nu^q) \leq  \frac{a_0}{2}  \frac{12}{h^2N}||\nu^q||_{A_q}^2 < 4||\nu^q||_{A_q}^2
\end{split}
\end{equation*}
and the proof is completed.
\end{proof}

\subsection{Convergence of the full MGM for (\ref{2.10}): a non-diagonally  dominant system}
We extend the convergence results of TGM given in the above subsection to the full MGM.
To the best of our knowledge, it is a first given the convergence of full MGM for the dense matrix.

\begin{lemma}\label{lemma4.6}
Let $B^{(1)}=B_h =\{b_{i,j}^{(1)}\}_{i,j=1}^{N-1}$ with $b_{i,j}^{(1)}=b_{|i-j|}^{(1)}$ be given in (\ref{2.11})
and $D_{(k)}$ be the diagonal of the matrix $B^{(k)}$, where  $B^{(k)}=L_h^{H}B^{(k-1)}L_{H}^{h}$ with $L_h^{H}=4I_k^{k-1}$ and $L_H^{h}=(L_h^{H})^T$. Then
$$1 \leq \lambda_{\max}\left(D_{(k)}^{-1}B^{(k)}\right) < 3,~~1\leq k\leq q,$$
with $q$ being the total number of levels.
\end{lemma}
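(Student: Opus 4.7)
The plan is to reduce Lemma~\ref{lemma4.6} to Lemma~\ref{lemma4.3} by identifying $B^{(k)}$ with a positive scalar multiple of the fresh linear FEM matrix of problem~(\ref{2.10}) on the coarser uniform mesh at level~$k$.

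The key structural fact is that the piecewise-linear hat basis is nested on dyadic meshes: each coarse hat satisfies $\phi_i^{(k)} = \tfrac12 \phi_{2i-1}^{(k-1)} + \phi_{2i}^{(k-1)} + \tfrac12 \phi_{2i+1}^{(k-1)}$ exactly, so the natural FEM prolongation $P$ (mapping coarse DOFs into fine DOFs) has columns with weights $(1/2,1,1/2)$ and equals $P = 2\,(I_k^{k-1})^T$. Consequently $L_h^H = 2P^T$, $L_H^h = 2P$, and $B^{(k)} = 4\, P^T B^{(k-1)} P$. On the other hand, variational consistency forces $A^{\mathrm{fresh},(k)} = P^T A^{\mathrm{fresh},(k-1)} P$, where $A^{\mathrm{fresh},(k)}$ denotes the fresh linear FEM discretization of~(\ref{2.10}) on the mesh of size $h_k = 2^{k-1} h$ with $N_k := N/2^{k-1}$ intervals. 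Writing $A^{\mathrm{fresh},(k)} = h_k^2 B^{\mathrm{fresh},(k)}$ (where $B^{\mathrm{fresh},(k)}$ has the form (\ref{2.11})--(\ref{2.12}) with $N_k$ in place of $N$) and using $h_k^2 = 4\, h_{k-1}^2$ yields $P^T B^{\mathrm{fresh},(k-1)} P = 4\, B^{\mathrm{fresh},(k)}$. Combining this with $B^{(k)} = 4\, P^T B^{(k-1)} P$ and the base case $B^{(1)} = B_h = B^{\mathrm{fresh},(1)}$, a short induction on $k$ gives
$$
B^{(k)} = 16^{k-1}\, B^{\mathrm{fresh},(k)}.
$$

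This scalar identification implies $D_{(k)} = 16^{k-1}\, D^{\mathrm{fresh},(k)}$, so $D_{(k)}^{-1} B^{(k)} = (D^{\mathrm{fresh},(k)})^{-1} B^{\mathrm{fresh},(k)}$ and the two matrices share the same spectrum. The bound $1 \leq \lambda_{\max}(D_{(k)}^{-1} B^{(k)}) < 3$ then follows immediately from Lemma~\ref{lemma4.3} applied to $B^{\mathrm{fresh},(k)}$, once one verifies (by direct inspection of the entries in~(\ref{2.12})) that the Gerschgorin argument in the proof of Lemma~\ref{lemma4.3} remains valid for every $N_k \geq 2$, in particular at every coarser level.

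The main obstacle is the variational-consistency/scaling bookkeeping: one must justify the exact identity $P = 2(I_k^{k-1})^T$ for the nested hat basis including the boundary columns, and then propagate the factor $(h_k/h_{k-1})^2 = 4$ correctly so that the two relations combine into the clean identity $B^{(k)} = 16^{k-1} B^{\mathrm{fresh},(k)}$. A more computational alternative, which stays inside the explicit framework already developed in the paper, would be to write $B^{(1)} = N\, T_{N-1}(g) - e e^T$ with $g(\theta) = \tfrac23 + \tfrac13 \cos\theta$ as in Subsection~\ref{ssec:3}, exploit that $L_h^H e = 4 e$ exactly (every row of $L_h^H$ sums to $4$) to obtain $B^{(k)} = N\, \widetilde{T}^{(k)} - 16^{k-1} e e^T$ with $\widetilde{T}^{(k)}$ the pentadiagonal Toeplitz matrix given explicitly by Lemma~\ref{lemma4.1}, and verify the $2$-dominance $\sum_{j \neq i} |b^{(k)}_{i,j}| < 2\, b^{(k)}_{i,i}$ termwise against those coefficients.
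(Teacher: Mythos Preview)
Your proof is correct, and your main route is genuinely different from the paper's. The paper proceeds purely at the matrix level: it writes $B_h = NI - \tfrac{N}{6}L_{N-1} - ee^{T}$, feeds each piece through the explicit Toeplitz-coarsening formulas of Lemma~\ref{lemma4.1}, and obtains the closed-form entries $b_0^{(k)} = \tfrac{2^{3k-2}}{3}N - 2^{4k-4}$, $b_1^{(k)} = \tfrac{2^{3k-4}}{3}N - 2^{4k-4}$, $b_l^{(k)} = -2^{4k-4}$; the 2-dominance $\sum_{j\neq i}|b_{i,j}^{(k)}| < 2\,b_{i,i}^{(k)}$ is then checked by hand on these numbers, and Gerschgorin finishes as in Lemma~\ref{lemma4.3}. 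Your argument instead exploits the variational meaning of the Galerkin operator: nested linear hats give exactly $A^{\mathrm{fresh},(k)} = P^{T}A^{\mathrm{fresh},(k-1)}P$, and combining this with $B^{(k)} = 4P^{T}B^{(k-1)}P$ and the $h_k^{2}$ scaling yields the clean identity $B^{(k)} = 16^{k-1}B^{\mathrm{fresh},(k)}$ (which one can verify matches the paper's entries), after which Lemma~\ref{lemma4.3} applies verbatim at size $N_k$. Your route is shorter, more conceptual, and as a bonus gives Lemma~\ref{lemma4.7} for free (it is just Lemma~\ref{lemma4.4} with $N$ replaced by $N_k$, scaled by $16^{k-1}$). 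The paper's route has the complementary advantage of being self-contained at the linear-algebra level and of producing the explicit coefficients (\ref{4.2}), which are reused later. The ``computational alternative'' you sketch at the end is essentially the paper's own proof.
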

\begin{proof}
From (\ref{2.11}), we have
\begin{equation*}
\begin{split}
B_h
=NI- \frac{N}{6}\left [ \begin{matrix}
2      &    -1           &            &                    \\
-1     &    2            &  -1        &           &         \\
       &   \ddots        &  \ddots    &   \ddots  &          \\
       &                 &  -1        &       2   &     -1    \\
       &                 &            &      -1   &      2
 \end{matrix}
 \right ]_{(N-1)\times(N-1)}
-
\left [ \begin{matrix}
1      &    1           &    \cdots   &     \!\!\!\! 1        \\
1      &    1           &    \cdots   &     \!\!\!\! 1         \\
\vdots & \vdots         &    \vdots   &     \!\!\! \! \vdots    \\
1      &    1           &    \cdots   &     \!\!\!\!  1
 \end{matrix}
 \right ]_{(N-1)\times(N-1)}
 \end{split}
\end{equation*}
with $I$ a identity matrix and $N=2^q$ (i.e.,  $n_q=N-1$ ).  Using Lemma \ref{lemma4.1}, it yields
\begin{equation}\label{4.1}
\begin{split}
B^{(k)}
=&N\!\!\left [ \begin{matrix}
4C_k+2^{k-1}  &    C_k                                             &                                       &                                       &                                     \\
C_k           &   \! \!\!\!  4C_k+2^{k-1}                          &  C_k                                  &                                       &                                      \\
              &  \! \!\!\!\! \!\!\!\! \!\!\!\! \!\!\! \ddots       &  \ddots                               &   \ddots                              &                                       \\
              &                                                    & \! \!\!\!\! \!\!\! C_k                &    \!\!  4C_k+2^{k-1}                 &   \!\!\!\!   C_k                       \\
              &                                                    &                                       &    \! \!\!\!\! \!\!\! \! \!\!\! C_k   &  \!\! \!\!\! \! \!\!\!  4C_k+2^{k-1}
 \end{matrix}
 \right ]_{\left(\frac{N}{2^{k-1}}-1\right)\times\left(\frac{N}{2^{k-1}}-1\right) }
  \\
&- \frac{N}{6}\left [ \begin{matrix}
2^k           &    -2^{k-1}           &               &                &                 \\
-2^{k-1}      &    2^k                &  -2^{k-1}     &                &                  \\
              &   \ddots              &  \ddots       &   \ddots       &                   \\
              &                       &  -2^{k-1}     &    2^k         &      -2^{k-1}      \\
              &                       &               &    -2^{k-1}    &      2^k
 \end{matrix}
 \right ]_{\left(\frac{N}{2^{k-1}}-1\right)\times\left(\frac{N}{2^{k-1}}-1\right) }\\
&-16^{k-1}
\left [ \begin{matrix}
1      &    1       &    \cdots       &     1        \\
1      &    1       &    \cdots       &     1         \\
\vdots & \vdots     &  \vdots         &   \vdots       \\
1      &    1       &    \cdots       &     1
 \end{matrix}
 \right ]_{\left(\frac{N}{2^{k-1}}-1\right)\times\left(\frac{N}{2^{k-1}}-1\right) }.
 \end{split}
\end{equation}
Denote $B^{(k)}=\{b_{i,j}^{(k)}\}_{i,j=1}^{\frac{N}{2^{k-1}}-1}$ with $b_{i,j}^{(k)}=b_{|i-j|}^{(k)}$.  Thus, we obtain
\begin{equation}\label{4.2}
\begin{split}
&b_0^{(k)}=\frac{2^{3k-2}}{3}N-2^{4k-4},~~
b_1^{(k)}=\frac{2^{3k-4}}{3}N-2^{4k-4},\\
&b_l^{(k)}=-2^{4k-4},~~2\leq l\leq \frac{N}{2^{k-1}}-2,
\end{split}
\end{equation}
so that
\begin{equation*}
\begin{split}
&2b_0^{(k)}-2b_1^{(k)}- \frac{N}{2^{k-1}}2^{4k-4}=0,~~{\rm if}~~b_1^{(k)}\geq 0,\\
&b_0^{(k)}-(-2b_1^{(k)})- \left(\frac{N}{2^{k-1}}-3\right)2^{4k-4}=0,~~{\rm if}~~b_1^{(k)}\leq 0.
\end{split}
\end{equation*}
In a word, there exits
$$ r_i^{(k)}:=\sum\limits_{j\neq i} |b_{i,j}^{(k)}|<2b_{i,i}^{(k)}.$$

By following the same steps  as in Lemma \ref{lemma4.3}, we have
$b_{i,i}^{(k)}\leq \lambda_{\max}(B^{(k)})< 3b_{i,i}^{(k)}.$
The proof is completed.
\end{proof}

\begin{lemma}\label{lemma4.7}
Let $B^{(k)}$ be defined by  (\ref{4.1})  and   $L_{\frac{N}{2^{k-1}}-1}={\rm tridiag}(-1,2,-1)$  be the $(\frac{N}{2^{k-1}}-1)\times (\frac{N}{2^{k-1}}-1)$ one dimensional discrete Laplacian.
Then $$H^{(k)}:=B^{(k)}-\frac{2^{3k-5}}{3}NL_{\frac{N}{2^{k-1}}-1},~~1\leq k\leq q,~N=2^q$$
 is a positive semi-definite  matrix.
 Here $q$ is  a total number of levels.
\end{lemma}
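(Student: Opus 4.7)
The plan is to mimic the proof of Lemma \ref{lemma4.4} at each coarse level, by first obtaining a structural representation of $H^{(k)}$ analogous to the compact form (\ref{compact}) used there, and then invoking the same Cauchy--Schwarz estimate. The odd-looking normalization $\frac{2^{3k-5}}{3}N$ in front of the discrete Laplacian is exactly the one that makes this reduction possible.

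\textbf{Step 1: explicit form of $H^{(k)}$.} Write $N_k = N/2^{k-1}$, so that $B^{(k)}$ is of size $(N_k-1)\times(N_k-1)$. Using the entries $b_0^{(k)}$, $b_1^{(k)}$, $b_l^{(k)}$ given in (\ref{4.2}), and subtracting the contribution of $\frac{2^{3k-5}}{3}N\,L_{N_k-1}$ (which removes $\frac{2^{3k-4}}{3}N$ from the diagonal and puts back $\frac{2^{3k-5}}{3}N$ on the first super/sub-diagonals), a direct arithmetic simplification gives diagonal entries $N\cdot 2^{3k-4} - 2^{4k-4}$, first off-diagonal entries $N\cdot 2^{3k-5} - 2^{4k-4}$, and all remaining entries equal to $-2^{4k-4}$. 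Equivalently,
\begin{equation*}
H^{(k)} \;=\; N\cdot 2^{3k-5}\, T_{N_k-1}(g(\theta)) \;-\; 2^{4k-4}\, e e^T, \qquad g(\theta)=2+2\cos(\theta),
\end{equation*}
where $e$ is the all-ones vector of size $N_k-1$. This is the direct analog of (\ref{compact}), to which it reduces when $k=1$.

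\textbf{Step 2: Cauchy--Schwarz.} With the convention $x_0 = x_{N_k} = 0$,
\begin{equation*}
(T_{N_k-1}(g)\,x,x) \;=\; \sum_{i=0}^{N_k-1}(x_i+x_{i+1})^2, \qquad (ee^T x,x) \;=\; \Bigl(\sum_{i=1}^{N_k-1} x_i\Bigr)^2,
\end{equation*}
and the elementary Cauchy--Schwarz inequality applied to the $N_k$ unit coefficients yields
\begin{equation*}
\sum_{i=0}^{N_k-1}(x_i+x_{i+1})^2 \;\ge\; \frac{1}{N_k}\Bigl(\sum_{i=0}^{N_k-1}(x_i+x_{i+1})\Bigr)^2 \;=\; \frac{4}{N_k}\Bigl(\sum_{i=1}^{N_k-1} x_i\Bigr)^2.
\end{equation*}
Since $N/N_k = 2^{k-1}$, multiplying by $N\cdot 2^{3k-5}$ produces exactly $2^{4k-4}(\sum x_i)^2$, so the two terms in $(H^{(k)}x,x)$ cancel to a nonnegative quantity and $(H^{(k)}x,x)\ge 0$ for every $x$.

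\textbf{Main obstacle.} No new idea beyond Lemma \ref{lemma4.4} is needed; the whole delicacy sits in Step 1, where one must book-keep several powers of $2$ and verify that the Toeplitz and Laplacian pieces of (\ref{4.1}), evaluated via (\ref{4.2}), really combine with $\frac{2^{3k-5}}{3}N\,L_{N_k-1}$ so as to leave only a positive multiple of $T_{N_k-1}(2+2\cos\theta)$ minus a rank-one correction. Once this cancellation is checked, Step 2 is essentially a copy of the Cauchy--Schwarz argument already used in Lemma \ref{lemma4.4}, and the sharper singularity/positive-definiteness distinction of that lemma is not needed here, since only positive semi-definiteness is claimed.
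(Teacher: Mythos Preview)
Your proof is correct and follows essentially the same approach as the paper: both reduce $H^{(k)}$ to the structural form $N\cdot 2^{3k-5}\,T_{N_k-1}(2+2\cos\theta)-2^{4k-4}ee^T$ (the paper does this by rewriting $B^{(k)}$ via the identity $C_k+2^{k-1}/6=2^{3k-4}/3$ and then scaling by $2^{4-4k}$ to match Lemma~\ref{lemma4.4} verbatim with $\widetilde N=N/2^{k-1}$, while you compute the entries directly from (\ref{4.2})), and both conclude by the same Cauchy--Schwarz argument.
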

\begin{proof}
Using $C_k+2^{k-1}/6=2^{3k-4}/3$, we can rewrite   (\ref{4.1}) as
\begin{equation*}
\begin{split}
B^{(k)}
=&2^{3k-3}NI-\frac{2^{3k-4}}{3}N\left [ \begin{matrix}
2      &    -1           &            &                    \\
-1     &    2            &  -1        &           &         \\
       &   \ddots        &  \ddots    &   \ddots  &          \\
       &                 &  -1        &       2   &      -1    \\
       &                 &            &      -1   &      2
 \end{matrix}
 \right ]_{\left(\frac{N}{2^{k-1}}-1\right)\times\left(\frac{N}{2^{k-1}}-1\right) } \\
&-2^{4k-4}
\left [ \begin{matrix}
1      &    1       &    \cdots       &     1        \\
1      &    1       &    \cdots       &     1         \\
\vdots & \vdots     &  \vdots         &   \vdots       \\
1      &    1       &    \cdots       &     1
 \end{matrix}
 \right ]_{\left(\frac{N}{2^{k-1}}-1\right)\times\left(\frac{N}{2^{k-1}}-1\right) }
 \end{split}
\end{equation*}
with $I$ a identity matrix. Thus
\begin{equation*}
\begin{split}
2^{4-4k}H^{(k)}
&= \frac{\widetilde{N}}{4}\left [ \begin{matrix}
2      &    1            &            &                    \\
1      &    2            &  1         &           &         \\
       &   \ddots        &  \ddots    &   \ddots  &          \\
       &                 &  1         &       2   &      1    \\
       &                 &            &       1   &      2
 \end{matrix}
 \right ]_{(\widetilde{N}-1)\times(\widetilde{N}-1)}
-
\left [ \begin{matrix}
1      &    1       &    \cdots       &     1        \\
1      &    1       &    \cdots       &     1         \\
\vdots & \vdots     &  \vdots         &   \vdots       \\
1      &    1       &    \cdots       &     1
 \end{matrix}
 \right ]_{(\widetilde{N}-1)\times(\widetilde{N}-1)}
 \end{split}
\end{equation*}
with $\widetilde{N}=\frac{N}{2^{k-1}}.$ 
By following the same steps  as in Lemma \ref{lemma4.4}, the desired result is obtained.
\end{proof}

\begin{theorem}
Let $A_q:=A_h$ be defined by (\ref{2.11}).
Then $K_{k}$ satisfies (\ref{3.1}) and the convergence factor of the full MGM satisfies
\begin{equation*}
||K_{k} T^k||_{A_k} \leq
\sqrt{1-\eta/4 }<1,~~1\leq k \leq q, \\
\end{equation*}
where $\eta\leq\omega(2-\omega\eta_0)$ with $0 < \omega <2/\eta_0$, $ \eta_0<3$.
\end{theorem}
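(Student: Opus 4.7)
The plan is to extend the two-grid argument of Theorem \ref{theorema4.5} to every coarse level by verifying that conditions (\ref{3.1}) and (\ref{3.2}) of Lemma \ref{lemma3.2} hold with constants $\eta$ and $\kappa$ that are independent of the level. Once this is done, Lemma \ref{lemma3.2} delivers the stated uniform bound $\|K_k T^k\|_{A_k} \leq \sqrt{1-\eta/4}$ at each $k$.

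For the smoothing property (\ref{3.1}), I would first reconcile the indexing of the $A_k$ appearing in the MGM algorithm with that of the $B^{(k)}$ used in Lemmas \ref{lemma4.1}--\ref{lemma4.7}. Since $L_h^H = 4 I_k^{k-1}$ while the Galerkin prolongation is $I_{m-1}^m = 2(I_m^{m-1})^T$, the two coarsening recursions differ only by a scalar factor $h^2/8^{k-1}$, which cancels in the similarity $D_k^{-1} A_k = D_{B^{(k)}}^{-1} B^{(k)}$. Lemma \ref{lemma4.6} then gives $\lambda_{\max}(D_k^{-1} A_k) < 3$ at every level, and Lemma \ref{lemma3.1} produces (\ref{3.1}) with a level-independent $\eta$ once we fix $\eta_0 < 3$, $0 < \omega < 2/\eta_0$, and $\eta \leq \omega(2-\omega\eta_0)$.

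For the approximation property (\ref{3.2}) at each level $k$, I would repeat the TGM argument of Theorem \ref{theorema4.5} on the coarse grid of size $\tilde n = N/2^{k-1}-1$. The interpolation estimate already used in \cite{CWCD:14,Pang:12} produces
\begin{equation*}
\|\nu^k - I_{k-1}^k \nu^{k-1}\|_{D_k}^2 \leq \tfrac{1}{2}(D_k)_{\max}\bigl(L_{\tilde n}\nu^k,\nu^k\bigr),
\end{equation*}
and Lemma \ref{lemma4.7}, after applying the common Galerkin rescaling $h^2/8^{k-1}$, converts the matrix inequality $B^{(k)} \succeq \tfrac{2^{3k-5}}{3} N L_{\tilde n}$ into the level-independent spectral bound $\|\nu^k\|_{A_k}^2 \geq \tfrac{h^2 N}{12}\bigl(L_{\tilde n}\nu^k,\nu^k\bigr)$. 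Combined with the diagonal estimate $(D_k)_{\max} \leq \tfrac{2h^2 N}{3}$ obtained from (\ref{4.2}), this yields the ratio $\kappa = 4$ uniformly in $k$.

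The principal obstacle is precisely this multi-layer rescaling: the factor $h^2/8^{k-1}$ between $A_k$ and $B^{(k)}$, the diagonal scale $b_0^{(k)} \sim 2^{3k-2}N/3$ from (\ref{4.2}), and the spectral coefficient $2^{3k-5}N/3$ from Lemma \ref{lemma4.7} must all cancel exactly so that $\kappa$ does not degrade as $k$ grows; otherwise the contraction factor would depend on the depth of the V-cycle and the full MGM estimate would fail. Once this level-by-level bookkeeping is carried out, invoking Lemma \ref{lemma3.2} at each $k$ closes the argument.
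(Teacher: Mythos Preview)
Your proposal is correct and follows essentially the same route as the paper: verify (\ref{3.1}) at every level via Lemma \ref{lemma4.6} and (\ref{3.2}) at every level via Lemma \ref{lemma4.7} with the level-independent constant $\kappa=4$, then invoke Lemma \ref{lemma3.2}. The only difference is cosmetic: the paper sidesteps your explicit $h^2/8^{k-1}$ bookkeeping by identifying $A_k=A^{(q-k+1)}$ and working directly with $A^{(k)}=h^2B^{(k)}$, computing $\tfrac{a_0^{(k)}}{2}\cdot\tfrac{3}{h^2\,2^{3k-5}N}<4$ (note that in the paper's MGM indexing the rescaling exponent would be $q-k$ rather than $k-1$, since the $B^{(k)}$ index and the MGM level run in opposite directions).
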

\begin{proof}
From Lemma \ref{lemma4.6} and (\ref{2.11}), we have  $\lambda_{\max}(D_k^{-1}A_k)\leq  \eta_0<3$.  Taking  $0 < \omega  < 2/\eta_0$, $\eta\leq\omega(2-\omega\eta_0)$
and using  Lemma \ref{lemma3.1}, we conclude that $K_k$ satisfies (\ref{3.1}).
Next we prove that (\ref{3.2}) holds, i.e., we need to find a closed form of the constant $\kappa$ for $A_k$ applied to Lemma \ref{lemma3.2}.
Let $A_k=A^{(q-k+1)}$,  $A^{(k)}=L_h^{H}A^{(k-1)}L_{H}^{h}$ with $L_h^{H}=4I_k^{k-1}$ and $L_H^{h}=(L_h^{H})^T$ and $D^{(k)}$ be  the diagonal of $A^{(k)}$.
Let $n_k$ be the size of the matrix $A^{(k)}$ and
$$\nu^k=(\nu_1,\nu_2,\ldots,\nu_{n_k})^{\rm T} \in\mathcal{R}^{n_k}, ~~ \nu^{k-1}=(\nu_2,\nu_4,\ldots,\nu_{n_k-1})^{\rm T} \in \mathcal{R}^{n_{k-1}}$$
with $\nu_0=\nu_{n_k+1}=0$.
From \cite{CWCD:14,Pang:12} and (\ref{2.11}), (\ref{4.2}), we have
\begin{equation*}
\begin{split}
||\nu^k-I_{k-1}^k\nu^{k-1}||_{D^{(k)}}^2
\leq a_0^{(k)} \sum_{i=1}^{n_{k}}\left(\nu^2_i -\nu_i\nu_{i+1}\right)^2=\frac{a_0^{(k)}}{2} (L_{n_k}\nu^k,\nu^k)
\end{split}
\end{equation*}
with $a_0^{(k)}=h^2 b_0^{(k)}=h^2\left(\frac{2^{3k-2}}{3}N-2^{4k-4}\right)$ and  $L_{n_k}={\rm tridiag}(-1,2,-1)$.

Using  (\ref{2.11}), (\ref{4.1}), (\ref{4.2}) and  Lemma \ref{lemma4.7},  we find
\begin{equation*}
\begin{split}
 ||\nu^k||_{A^{(k)}}^2=(A^{(k)}\nu^k,\nu^k)= h^2(B^{(k)}\nu^k,\nu^k)\geq h^2\frac{2^{3k-5}}{3}N(L_{n_k}\nu^k,\nu^k).
\end{split}
\end{equation*}
 According to the above equations, we deduce
\begin{equation*}
\begin{split}
||\nu^k-I_{k-1}^k\nu^{k-1}||_{D^{(k)}}^2\leq \frac{a_0^{(k)}}{2} (L_{n_k}\nu^k,\nu^k) \leq  \frac{a_0^{(k)}}{2}  \frac{3}{h^22^{3k-5}N}||\nu^k||_{A^{(k)}}^2 < 4||\nu^k||_{A^{(k)}}^2
\end{split}
\end{equation*}
and the proof is completed.
\end{proof}

\section{Numerical Results}
We employ the V-cycle MGM  described in Algorithm  \ref{MGM} to solve the steady-state nonlocal problems (\ref{2.1}).
The stopping criterion is taken as
$$\frac{||r^{(i)||}}{||r^{(0)}||}<10^{-10}~~{\rm for~~ (\ref{2.7})},~~\frac{||r^{(i)||}}{||r^{(0)}||}<10^{-13}~~{\rm for~~ (\ref{2.10})},$$
where $r^{(i)}$ is the residual vector after $i$ iterations;
and the  number of  iterations $(m_1,m_2)=(1,2)$ and $(\omega_{pre},\omega_{post})=(1,1)$ for (\ref{2.7}), $(\omega_{pre},\omega_{post})=(1/2,1)$ for (\ref{2.10})
In all tables, $N$  denotes the number of spatial grid points; and the numerical errors are measured by the $ l_{\infty}$
(maximum) norm,  `Rate' denotes the convergence orders.
`CPU' denotes the total CPU time in seconds (s) for solving the resulting discretized  systems;
and `Iter' denotes the average number of iterations required to solve a general linear system $A_hu_h=f_h$.

All numerical experiments are programmed in Matlab, and the computations are carried out  on a PC with the configuration:
Intel(R) Core(TM) i5-3470 3.20 GHZ and 8 GB RAM and a 64 bit Windows 7 operating system.

\begin{example}[a Toeplitz-plus-tridiagonal  system]
Consider the steady-state nonlocal problem (\ref{2.7})
on  a finite domain $0< x < b $, $b=2$. The exact solution of the equation is $u(x)=x^2(b-x)^2$, and the source function
\begin{equation*}
\begin{split}
f(x)
=&-\frac{\kappa_\alpha \alpha (\alpha-5)(-\alpha^2-5\alpha-10)}{\Gamma(5-\alpha)}\left( x^{4-\alpha} +(b-x)^{4-\alpha}\right)\\
  &+\frac{2b\kappa_\alpha \alpha (\alpha^2-6\alpha+11)}{\Gamma(4-\alpha)}\left( x^{3-\alpha} +(b-x)^{3-\alpha}\right)\\
    &-\frac{b^2\kappa_\alpha \alpha (3-\alpha)}{\Gamma(3-\alpha)}\left( x^{2-\alpha} +(b-x)^{2-\alpha}\right).
\end{split}
\end{equation*}
\end{example}

\begin{table}[h]\fontsize{9.5pt}{12pt}\selectfont%���ɸ�������
  \begin{center}%\def\tabcolsep{28.5pt}%��������
  \caption{Using Galerkin approach  $A_{k-1}=I_k^{k-1}A_kI_{k-1}^{k}$ computed by Lemmas \ref{lemma4.1} and \ref{lemma4.2} to solve the
  resulting systems of (\ref{2.7}).}\vspace{5pt}%���⣬������һ���ľ���
 {\small   \begin{tabular*}{\linewidth}{@{\extracolsep{\fill}}*{9}{c}}                                    \hline  %�����˵ĺ���
$N$            &  $\alpha=1.3$   & Rate    & Iter    & CPU       &  $\alpha=1.7$  &   Rate &  Iter  &  CPU    \\
    $2^{9}$   &   1.6294e-05  &         &  30     & 0.17 s   &  1.3629e-05        &        &  79    & 0.22 s\\ %���׶˵ĺ���
    $2^{10}$   &   4.1063e-06  & 1.9884    &  31     & 0.31 s   &  3.5307e-06         &  1.9487  &  79    & 0.39 s \\ %���׶˵ĺ���
    $2^{11}$   &   1.0284e-06  & 1.9974    &  33     & 0.60 s   &  9.0793e-07        &  1.9593  &  78    & 0.71 s \\ %���׶˵ĺ���
    $2^{12}$   &   2.5718e-07  & 1.9996    &  35     & 1.17 s   &  2.3572e-07         &  1.9455  &  78    & 1.34 s \\\hline %���׶˵ĺ���
    \end{tabular*}}\label{tab:1}%\vspace{-15pt}
  \end{center}
\end{table}

Table \ref{tab:1}  shows that  the numerical scheme has second-order accuracy and the computation cost is almost    $\mathcal{O}(N \mbox{log} N)$ operations.
%%%%%%%%%%%%%%%%%%%%%%%%%%%%%%%%%%%%%%%%%%%%%%%%%%%%%%%%%%%%%%%%%%%%%%%%%%%%%%%%%%%%%%%%%%%%%%%%%%%%%%%%%%%%%%%%%%%%%%%%%%%%%%%%%%%%
\begin{example}[a non-diagonally  dominant system]
Consider the steady-state nonlocal problem (\ref{2.10})
on  a finite domain $0< x < b $, $b=2$. The exact solution of the equation is $u(x)=x^2(b-x)^2$, and the source function
\begin{equation*}
\begin{split}
f(x)=bx^2(b-x)^2-\frac{1}{30}b^5.
\end{split}
\end{equation*}
\end{example}

\begin{table}[h]\fontsize{9.5pt}{12pt}\selectfont%���ɸ�������
  \begin{center}%\def\tabcolsep{28.5pt}%��������
  \caption{Using  Galerkin approach  $A_{k-1}=I_k^{k-1}A_kI_{k-1}^{k}$ computed by Lemma \ref{lemma4.1} or  (\ref{lemma4.2}) to solve the
  resulting systems of (\ref{2.10}).}\vspace{5pt}%���⣬������һ���ľ���
 {\small   \begin{tabular*}{\linewidth}{@{\extracolsep{\fill}}*{10}{c}}                                    \hline  %�����˵ĺ���
$N$            &               & Rate      & Iter    & CPU     &$N$   &                    &   Rate   &  Iter  &  CPU    \\
    $2^{11}$   &   9.5325e-07  &           &  83     & 0.38 s  & $2^{14}$ &  1.4910e-08        &  1.9991        &  86    & 2.61 s\\ %���׶˵ĺ���
    $2^{12}$   &   2.3837e-07  & 1.9997    &  84     & 0.70 s  & $2^{15}$ &  3.7396e-09        &  1.9953  &  86    & 4.90 s \\ %���׶˵ĺ���
    $2^{13}$   &   5.9603e-08  & 1.9997    &  85     & 1.33 s  & $2^{16}$ &  9.6707e-10        &  1.9512  &  87    & 9.76 s \\\hline %���׶˵ĺ���
    \end{tabular*}}\label{tab:2}%\vspace{-15pt}
  \end{center}
\end{table}
Table \ref{tab:2}  shows that  the numerical scheme has second-order accuracy and the computation cost is almost  $\mathcal{O}(N \mbox{log} N)$ operations.
%%%%%%%%%%%%%%%%%%%%%%%%%%%%%%%%%%%%%%%%%%%%%%%%%%%%%%%%%%%%%%%%%%%%%%%%%%%%%%%%%%%%%%%%%%%%%%%%%%%%%%%%%%%%%%%%%%%%%%%%%%%%%%%%%%%%

\section{Conclusions}

In this paper, we considered the solutions of Toeplitz-plus-tridiagonal systems, which are far from being weakly diagonally dominant and which arise from nonlocal problems, when using linear finite element approximations.
We provided the convergence rate of the TGM for nonlocal problems with the fractional Laplace kernel, which is a Toeplitz-plus-tridiagonal system.
In this specific context, we answered the question on how to define coarsening and interpolation operators, when the stiffness matrix is non-diagonally dominant \cite{Stuben:01}.
The simple (traditional) restriction operator and prolongation operator are employed for such algebraic systems, so that the entries of the sequence of subsystems are explicitly determined on different levels.
In the case of the constant (Laplacian style) kernel, we gave a quite accurate spectral analysis and, based on that, on the structure analysis, and on the computation of the characteristic values at different levels, we extended the TGM convergence results to the full MGM.

For the future, at least two questions arive that is the analysis of the spectral features and the study of the MGM convergence analysis in the case of  the fractional Laplace kernel.

\bibliographystyle{amsplain}

\end{document}